\title{Few Induced Disjoint Paths for $H$-Free Graphs\thanks{An extended abstract of this paper will appear in the proceedings of ISCO 2022~\cite{MPSV2}.}}
\author{Barnaby Martin\inst{1}\and Dani\"el Paulusma\inst{1}  \and Siani Smith\inst{1} \and Erik Jan van Leeuwen\inst{2}}
\institute{Department of Computer Science,
 Durham University, Durham, UK,
\email{\{barnaby.d.martin,daniel.paulusma,siani.smith\}@durham.ac.uk}
\and
Department of Information and Computing Sciences, 
Utrecht University,\\
The Netherlands,
\email{e.j.vanleeuwen@uu.nl}
}
\newcommand{\ssi}{\subseteq_i}
\newcommand{\problemdef}[3]{
	\begin{center}
		\begin{boxedminipage}{1.02\textwidth}
			\textsc{{#1}}\\[1pt]  
			\begin{tabular}{ r p{0.8\textwidth}}
				\textit{~~~~Instance:} & {#2}\\
				\textit{Question:} & {#3}
			\end{tabular}
		\end{boxedminipage}
	\end{center}
}
\newcommand{\NP}{{\sf NP}}
\newcommand{\FPT}{{\sf FPT}}
\newcommand{\XP}{{\sf XP}}
\newcommand{\W}{{\sf W}}
\newcounter{ctrclaim}[theorem]
\newcounter{ctrcase}[theorem]
\newtheorem{claimm}{Claim}
\begin{document}

\maketitle

\begin{abstract}
Paths $P^1,\ldots,P^k$ in a graph $G=(V,E)$ are mutually induced if any two distinct $P^i$ and $P^j$ have neither common vertices nor adjacent vertices. For a fixed integer~$k$, the {\sc $k$-Induced Disjoint Paths} problem is to decide if a graph~$G$ with $k$ pairs of specified vertices $(s_i,t_i)$ contains $k$ mutually induced paths~$P^i$ such that each $P^i$ starts from $s_i$ and ends at~$t_i$. 
Whereas the non-induced version is well-known to be polynomial-time solvable for every fixed integer $k$, a classical result from the literature states that even {\sc $2$-Induced Disjoint Paths} is \NP-complete.
We prove new complexity results for {\sc $k$-Induced Disjoint Paths} if the input is restricted to $H$-free graphs, that is, graphs without a fixed graph $H$ as an induced subgraph. We compare our results with a complexity dichotomy for {\sc Induced Disjoint Paths}, the variant where $k$ is part of the input.
\keywords{induced disjoint paths; $H$-free graph; complexity dichotomy}
\end{abstract}

\section{Introduction}\label{s-intro}

We consider problems related to finding paths connecting pre-specified pairs of vertices. 
A path between vertices $s$ and $t$ in an undirected graph~$G$ is an {\it $s$-$t$ path} with {\it terminals} $s$ and $t$.
Terminal pairs $(s_1,t_1),\ldots, (s_k,t_k)$ are {\it pairwise disjoint} if $\{s_i,t_i\}\cap \{s_j,t_j\}=\emptyset$ for $i\neq j$. The well-known problem {\sc $k$-Disjoint Paths} is to decide for a graph $G$ and pairwise~disjoint~terminal~pairs~$(s_1,t_1)\ldots,(s_k,t_k)$, if there are pairwise vertex-disjoint paths $P^1$,\ldots,$P^k$ such that $P^i$ is an $(s_i,t_i)$-path for $i\in \{1,\ldots,k\}$;
here $k$ is {\it fixed}, that is, $k$ is not part of the input.
 
Shiloach~\cite{Sh80} proved that $2$-{\sc Disjoint Paths} is polynomial-time solvable.
Robertson and Seymour~\cite{RS95} even gave a polynomial-time algorithm for {\sc $k$-Disjoint Paths} for every integer~$k\geq 2$. In contrast, {\sc Disjoint Paths}, the variant where $k$ is part of the input, appeared on Karp's list of \NP-complete problems.

\medskip
\noindent
{\bf Our Focus.} We consider the {\it induced} variant of {\sc $k$-Disjoint Paths}. We say that paths $P^1,\ldots,P^k$ in a graph $G=(V,E)$ are {\it mutually induced} if any two distinct $P^i$ and $P^j$ have neither common vertices nor adjacent vertices, that is, if $i\neq j$ then $V(P^i)\cap V(P^j)=\emptyset$ and
$uv\notin E$ for every $u\in V(P^i)$ and $v\in V(P^j)$. This leads to the following problem, where $k$ is a fixed constant.

\problemdef{$k$-Induced Disjoint Paths}{a graph $G$ and pairwise~disjoint~terminal~pairs~$(s_1,t_1)\ldots,(s_k,t_k)$.}{Does $G$ have mutually induced paths $P^1$,\ldots,$P^k$ such that $P^i$ is an $s_i$-$t_i$ path for $i\in \{1,\ldots,k\}$?}

\noindent
In contrast to the previous setting, even {\sc $2$-Induced Disjoint Paths} is \NP-complete, as shown both by Bienstock~\cite{Bi91} and Fellows~\cite{Fe89}. Restricting the input to some special graph class might help improve our understanding of the hardness of the problem. 
To do this systematically we focus on hereditary graph classes. 

A class of graphs is {\it hereditary} if it is closed under vertex deletion. This is a natural property and non-surprisingly hereditary graph classes provide a framework that captures many well-known graph classes. In particular, it is not difficult to see that a graph class ${\cal G}$ is hereditary if and only if it can be characterized by a (unique) set ${\cal F}_{\cal G}$ of forbidden induced subgraphs. For example, if ${\cal G}$ is the class of bipartite graphs, then ${\cal F}_{\cal G}$ is the set of all odd cycles. 

The characterization by ${\cal F}_{\cal G}$ allows for a systematic study, which usually starts with the case where ${\cal F}_{\cal G}$ has size~$1$, say ${\cal F}_{\cal G}=\{H\}$ for some graph $H$. A graph is {\it $H$-free} if it cannot be modified to $H$ by a sequence of vertex deletions, and if ${\cal F}_{\cal G}=\{H\}$ we obtain the class of {\it $H$-free graphs}, which we consider in our paper.

\subsection{Related Work} 

We first discuss existing results for {\sc Induced Disjoint Paths} (where $k$ is part of the input). 
All the positive results hold for a slightly more general problem definition (see Section~\ref{s-con}). 
Golovach et al.~\cite{GPV16,GPV22} proved that 
{\sc Induced Disjoint Paths} is linear-time solvable for circular-arc graphs and polynomial-time solvable for AT-free graphs, respectively.
Belmonte et al.~\cite{BGHHKP14} showed the latter for chordal graphs, and Jaffke et al.~\cite{JKT20} did so for any graph class of bounded mim-width. 
In contrast, {\sc Induced Disjoint Paths} stays \NP-complete even for claw-free graphs~\cite{FKLP12},  line graphs of triangle-free chordless graphs~\cite{RTV21} and thus for (theta,wheel)-free graphs, and for planar graphs; to prove the latter, use a result of Lynch~\cite{Ly75} (see~\cite{GPV22}). 

The following recent dichotomy is immediately relevant for our paper. 
Let $G_1+G_2$ be the disjoint union of~two vertex-disjoint graphs $G_1$ and $G_2$, and let $sG$ denote the disjoint union of $s$ copies of a graph $G$. 
We write $F\ssi G$ if $F$ is an {\it induced} subgraph of a graph $G$, that is, $F$ can be obtained from $G$ by a sequence of vertex deletions. We let $P_r$ denote the path on $r$ vertices. A {\it linear forest} is the disjoint union of one or more paths. 

\begin{theorem}[\cite{MPSV}]\label{thm:IDPnew}
For a graph $H$, {\sc Induced Disjoint Paths} on $H$-free graphs is polynomial-time solvable if  
$H \ssi sP_3+P_6$ for some $s\geq 0$; \NP-complete if $H$ is not a linear forest; and quasipolynomial-time solvable otherwise.
\end{theorem}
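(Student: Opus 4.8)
The plan is to treat the three regimes separately; membership in \NP{} is immediate, since the $k$ paths themselves form a polynomially checkable certificate. For the hardness statement, suppose $H$ is not a linear forest. Then $H$ either contains a cycle or is a forest with a vertex of degree at least~$3$. In the forest case the three neighbours of a degree-$3$ vertex are pairwise non-adjacent (a forest has no triangle), so $K_{1,3}\ssi H$; hence every claw-free graph is $H$-free, and the problem is already \NP-complete on claw-free graphs by~\cite{FKLP12}. In the cyclic case every graph of girth exceeding $|V(H)|$ is $H$-free, because it contains no cycle short enough to host the shortest cycle of~$H$; it therefore suffices to show that \problemIDP{} is \NP-complete on graphs of arbitrarily large girth, which I would obtain by a reduction realising all connections through long induced paths so that every cycle is long. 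Designing this high-girth gadgetry so that it preserves the intended non-adjacencies is the more delicate part of the hardness step.

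For the two tractable regimes I would first record a uniform simplification. Writing a linear forest $H$ as $P_{a_1}+\cdots+P_{a_m}$ and laying its components along a path with one skipped vertex between consecutive components embeds $H$ as an induced subgraph of $P_t$ with $t=m-1+\sum_i a_i$; thus every $H$-free graph is $P_t$-free. In a connected $P_t$-free graph a shortest path is induced and uses at most $t-1$ vertices, so the diameter is at most $t-2$. Since a chord of a solution path can always be shortcut, which only deletes vertices and so cannot create a new adjacency to another path, we may assume every solution path is chordless, hence induced, hence on at most $t-1$ vertices. Finally, distinct components of $G$ share no edges, so their paths never interfere and it suffices to solve each component separately. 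Thus in every tractable case we work inside a connected graph of bounded diameter in which all solution paths are short.

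For a general linear forest~$H$ I would derive the quasipolynomial bound by importing the branching machinery developed for $P_t$-free graphs. The algorithm recursively guesses how a well-chosen vertex, together with its bounded neighbourhood, participates in the solution, namely whether it is used by some path and, if so, which of its constantly many incident fragments it carries, in such a way that each branch either commits a piece of the solution or deletes enough of the graph to control the recursion, yielding a recursion tree of quasipolynomial size. The point I would need to verify is that \problemIDP{} fits this framework: because solution paths are short, only a bounded amount of information per vertex must be tracked, which is what lets the $P_t$-free potential argument go through. This already covers the ``otherwise'' case of the statement.

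The polynomial regime, $H\ssi sP_3+P_6$, is where I expect the main obstacle. Here $H$-freeness is far stronger than $P_t$-freeness: the layout above gives $sP_3+P_6\ssi P_{6+4s}$, so these graphs contain no induced $P_{6+4s}$, each component has diameter at most $4+4s$, and every solution path has at most $5+4s$ vertices. The task then reduces to choosing, for each terminal pair, one short induced path so that the chosen paths are pairwise vertex-disjoint and non-adjacent. The difficulty is that, even with short candidates, this selection is a priori a hard constraint-satisfaction problem; the heart of the proof must be a dedicated structural analysis of $(sP_3+P_6)$-free graphs showing that candidate paths interact only locally, so that after guessing a bounded amount of global information the remaining choices decouple and can be resolved in polynomial time, for instance by a matching- or $2$-SAT-type subroutine or bounded-width dynamic programming. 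Proving that this decomposition survives exactly up to $s$ extra copies of $P_3$ together with one $P_6$, and breaks beyond it, is both the crux of the argument and the reason the complexity only drops to quasipolynomial for larger linear forests.
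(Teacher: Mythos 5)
A preliminary framing point: the paper you were compared against does not prove Theorem~\ref{thm:IDPnew} at all --- it imports the statement from~\cite{MPSV} --- so your proposal must be measured against the proof given there. Your hardness skeleton matches it: if $H$ is a forest but not a linear forest, a degree-$3$ vertex has three pairwise non-adjacent neighbours, so $K_{1,3}\ssi H$ and the known \NP-completeness of \problemIDP{} on claw-free graphs~\cite{FKLP12} applies; if $H$ has a cycle, it suffices to prove hardness on graphs of girth exceeding $|V(H)|$. But for the cyclic case you only promise ``high-girth gadgetry'' and yourself flag it as the delicate part --- and that is exactly where the work lies. Section~\ref{s-np} of the present paper illustrates how much care such constructions demand even for fixed $k=2$ and the single graphs $C_6$ and $H_\ell$ (subdividing edges of a reduction does not automatically preserve the intended path structure, and the bottleneck vertices must survive the subdivision). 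Until a girth-$g$ construction is actually built and verified, the \NP-complete branch of your proof is an assertion, not an argument.

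The algorithmic side has the more serious gap. Your reduction to bounded-length paths is correct and is also the opening move of~\cite{MPSV}: $H\ssi P_t$ for $t=m-1+\sum_i a_i$, shortcutting makes every solution path chordless and hence on at most $t-1$ vertices, and components can be treated separately. From there, however, you propose to ``import the branching machinery'' for $P_t$-free graphs directly to \problemIDP{}, and in the polynomial regime you explicitly leave the crux open, speculating about $2$-SAT, matching, or bounded-width dynamic programming. The missing idea --- which this paper itself points to in Section~\ref{s-con} when it cites~\cite{MPSV} for ``a close relationship between both problems'' ({\sc Independent Set} and \problemIDP{}) on $P_r$-free graphs --- is a reduction to {\sc Independent Set} on an auxiliary conflict graph: its vertices are the $O(n^t)$ candidate short induced $s_i$--$t_i$ paths, two candidates adjacent when they are not mutually induced; candidates for the same pair share their terminals, so an independent set of size $k$ selects exactly one path per pair and is precisely a solution. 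The key lemma is that this conflict graph inherits $P_{t'}$-freeness from $G$; quasipolynomial-time {\sc Max Weight Independent Set} on $P_{t'}$-free graphs then yields the quasipolynomial case, and polynomial-time {\sc Independent Set} on $P_6$-free graphs~\cite{GKPP22} drives the $H\ssi sP_3+P_6$ case, with the $sP_3$ part absorbed by a guessing argument in the spirit of Lemma~\ref{l-sp1}. Your sketch contains neither the conflict-graph construction nor the inheritance lemma, and the remark that ``only a bounded amount of information per vertex must be tracked'' does not by itself transfer a potential-based branching analysis designed for one independent set to $k$ mutually induced paths. A final nit: the theorem asserts no hardness beyond $sP_3+P_6$, so nothing ``breaks'' there; the boundary mirrors the open status of {\sc Independent Set} on $P_7$-free graphs, not a hardness result you would need to supply.
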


\noindent
We return to Theorem~\ref{thm:IDPnew} later, and we now fix $k$. Radovanovi\'c et al.~\cite{RTV21} proved that 
 {\sc $k$-Induced Disjoint Paths} is polynomial-time solvable for (theta,wheel)-free graphs. Fiala et al.~\cite{FKLP12} proved the same result for claw-free graphs. Note that both results complement the aforementioned hardness results when $k$ is part of the input. Golovach et al.~\cite{GPV15} showed
 that {\sc Induced Disjoint Paths} is even \FPT\ with parameter~$k$ for claw-free graphs. The same holds for planar graphs~\cite{KK12}, and  even for graph classes of bounded genus, as shown by Kobayashi and Kawarabayashi~\cite{KK09}.
 Let $C_r$ denote the $r$-vertex cycle.
It follows (using Lemma~\ref{c-icred}) from a result of Leveque et al.~\cite{LLMT09} that {\sc $2$-Induced Disjoint Paths} is \NP-complete for $H$-free graphs if $H=C_r$ for every $r\geq 3$ with $r\neq 6$.
 
The generalization from paths to connected subgraphs joining sets of terminals instead of pairs has also been considered, but these results do not impact upon our work in this paper; we refer to~\cite{MPSV} for further details. Moreover, the restriction to $H$-free graphs has also been studied for {\sc Disjoint Paths} (recall that if $k$ is fixed this problem is polynomial in general~\cite{RS95}); see~\cite{KMPSV22} for a complexity classification of {\sc Disjoint Paths} for $H$-free graphs, subject to a set of three unknown cases.

\subsection{Our Results.} 

To explain our results we first introduce some extra terminology. For $r\geq 1$, the graph $K_{1,r}$ is the $(r+1)$-vertex {\it star}, i.e., the graph with vertices $x,y_1,\ldots,y_r$ and edges $xy_i$ for $i=1,\ldots,r$. The graph~$K_{1,3}$ is  known as the {\it claw}.  The {\it subdivision} of an edge $uw$ removes $uw$ and replaces~it with a new vertex $v$ and edges $uv$, $vw$.  A {\it subdivided claw} is a tree with one~vertex~$x$ of degree~$3$ and exactly three leaves. For $1\leq h\leq i\leq j$, let $S_{h,i,j}$ be the subdivided claw whose three leaves are of distance~$h$,~$i$ and~$j$ from the vertex of degree~$3$. Note that $S_{1,1,1}=K_{1,3}$. The graph $S_{1,1,2}$ is called the {\it chair} (or {\it fork}). Let ${\cal S}$ be the set of graphs, each connected component of which is a path or a subdivided claw.

Using the above terminology we can now present our main theorem.

\begin{theorem}\label{thm:k-IDP}
Let $k\geq 2$.
For a graph $H$, {\sc $k$-Induced Disjoint Paths} is polynomial-time solvable if $H$ is a subgraph of the disjoint union of a linear forest and a chair, and it is \NP-complete if $H$ is not in ${\cal S}$.
\end{theorem}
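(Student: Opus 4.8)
The plan is to prove the two halves separately: a polynomial-time algorithm for the class where $H$ is a subgraph of a linear forest plus a chair, and an \NP-completeness reduction for every $H\notin{\cal S}$. Throughout I will use the reformulation that a solution to {\sc $k$-Induced Disjoint Paths} is exactly an induced linear forest with $k$ components, the $i$-th of which joins $s_i$ to $t_i$; indeed, mutually induced paths are pairwise anticomplete, so their union is induced and has maximum degree two with no cycle.

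For the polynomial part, since $H\ssi L+S_{1,1,2}$ implies that every $H$-free graph is also $(L+S_{1,1,2})$-free, it suffices to solve the problem on $(L+S_{1,1,2})$-free graphs for an arbitrary linear forest $L$ and the chair $S_{1,1,2}$. I would do this in two stages. First I would give a polynomial-time algorithm for the \emph{chair-free} case; this is where fixing $k$ is essential, since for $k$ in the input the problem is already \NP-complete on chair-free graphs (the chair is not a linear forest, so this follows from the \NP-complete branch of Theorem~\ref{thm:IDPnew}). The idea is to extend the known algorithm for claw-free graphs~\cite{FKLP12} using the structural theory of fork-free graphs, decomposing $G$ into pieces on which the at most $k$ paths behave controllably, and recombining the $O(1)$ many ways of distributing the $k$ terminal pairs over the pieces. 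Second, to pass from chair-free to $(L+S_{1,1,2})$-free graphs, I would use the observation that if $G$ contains an induced chair $D$ then $G-N[D]$ is $L$-free; turning this into a bounded-branching reduction that peels off the structurally simple linear-forest part and leaves a chair-free instance, again exploiting that $k$ is constant so that only a fixed number of guesses about which paths enter the peeled region are needed.

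For the hardness part it suffices to exhibit, for each connected graph $F\notin{\cal S}$, a polynomial-time reduction producing only $F$-free graphs, because $H\notin{\cal S}$ has a component containing such an $F$ as an induced subgraph, and $F$-free graphs are then $H$-free. A connected graph outside ${\cal S}$ must contain an induced cycle $C_r$, or an induced $K_{1,4}$, or a tree with two vertices of degree three; so it is enough to prove \NP-completeness on the classes of $C_r$-free graphs (each $r\geq 3$), of $K_{1,4}$-free graphs, and of graphs avoiding each fixed two-branch tree. I would obtain all of these from a single subcubic base reduction for {\sc $2$-Induced Disjoint Paths} (building on Bienstock~\cite{Bi91} and Fellows~\cite{Fe89}, and on the induced-cycle detection of Leveque et al.~\cite{LLMT09} via Lemma~\ref{c-icred}), and then \emph{stretch} it by repeatedly subdividing edges. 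Subdividing raises the girth, killing any fixed $C_r$; keeping the maximum degree at most three makes every instance $K_{1,4}$-free; and stretching until every two degree-three vertices lie at distance larger than $|V(H)|$ ensures that no fixed two-branch tree occurs as an induced subgraph, since such a tree needs its two branch vertices within bounded distance. Finally I would lift {\sc $2$-Induced Disjoint Paths} to {\sc $k$-Induced Disjoint Paths} by attaching $k-2$ trivial terminal gadgets placed far from the rest, chosen so as not to reintroduce any forbidden induced subgraph.

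The main obstacle is the core chair-free algorithm: fork-free graphs lack the tight near-line-graph structure that makes the claw-free case tractable, so the chief difficulty is to control how $k$ induced paths interact across a fork-free decomposition while keeping the running time polynomial for fixed $k$. On the hardness side the delicate point is the value $r=6$: the induced-cycle reduction of Leveque et al.\ is stated only for $r\neq 6$ (mirroring the special role of $P_6$ in Theorem~\ref{thm:IDPnew}), so obtaining \NP-completeness on $C_6$-free graphs requires a dedicated large-girth construction rather than a direct appeal to that result, and verifying that the stretched instances remain correct under the \emph{induced} constraint, rather than merely the disjoint one, must be done with care.
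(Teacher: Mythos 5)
There are genuine gaps in both halves of your plan, and the more serious one is on the hardness side: the strategy of taking a single subcubic base reduction and \emph{stretching it by subdividing edges} cannot work for {\sc $2$-Induced Disjoint Paths}. Once every edge of a subcubic instance is subdivided even a couple of times, a path touching an internal vertex of a subdivided edge must traverse that entire edge, so two vertex-disjoint solution paths can only be adjacent by sharing a branch vertex; away from terminals, mutual induction then degenerates to plain vertex-disjointness, and {\sc $2$-Disjoint Paths} is polynomial-time solvable~\cite{Sh80}. Consequently you cannot simultaneously raise the girth above $6$, or push all degree-$3$ vertices pairwise further apart than $|V(H)|$, while keeping the problem hard: every working gadget must contain unsubdividable ``conflict'' edges whose adjacencies do real work. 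This is exactly why Lemma~\ref{c-icred} permits subdividing only the edges incident to the newly introduced vertices (the bottleneck vertices $q_1,q_2,b_1,b_2$ must keep their adjacencies), and why the paper handles $C_6$ and the two-branch-vertex trees $H_\ell$ not by stretching but by a substantial redesign of the gadgets of Leveque et al.~\cite{LLMT09}: extra cross edges are added to the literal/clause interfaces, only selected connector edges are replaced by length-$\ell$ paths, the satisfiability correctness proof is redone from scratch (Claim~1), and $(C_6,H_i)$-freeness for all $i\le\ell$ is verified, partly by computer, on the bounded cores of Fig.~\ref{fig:2IDP-omission-cases} (Claim~2, Lemma~\ref{l-h}). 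Note that the paper's $C_6$-free construction still contains triangles; the ``dedicated large-girth construction'' you defer to is precisely what the degeneration argument above makes doubtful. Your case analysis of graphs outside ${\cal S}$ (cycles via Lemma~\ref{l-girth}, $K_{1,4}$ via maximum degree~$3$ as in Lemma~\ref{l-k14}, two-branch trees reducing to $H_\ell$) and the lift from $k=2$ to general $k$ by far-away dummy terminal pairs do match the paper.

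On the polynomial side, the core chair-free algorithm --- which you yourself flag as the main obstacle --- is the missing idea, and the paper does \emph{not} develop any fork-free decomposition theory. Instead it uses a theorem of Alekseev~\cite{Al04} (Theorem~\ref{t-lp}): a connected chair-free graph containing an induced claw and an induced $P_8$ has a vertex adjacent to all vertices of that path. The algorithm of Lemma~\ref{l-poly} guesses which of the $k$ solution paths are short (fewer than eight vertices), brute-forces those, deletes their closed neighbourhoods, deletes every common neighbour of each remaining pair $s_i,t_i$ (safe since all remaining paths are long), and then argues each resulting component must be claw-free: otherwise Theorem~\ref{t-lp} would yield a vertex dominating a long solution path and hence adjacent to both its terminals, which were removed. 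The claw-free algorithm of~\cite{FKLP12} then finishes. Your second-stage peeling is also stated backwards: deleting $N[D]$ for an induced chair $D$ in an $(L+\mbox{chair})$-free graph leaves an $L$-free graph, not a chair-free one, and in any case solution paths may run through $N[D]$, which can contain almost all of $G$, so this is not a valid reduction. The paper's Lemma~\ref{l-sp1} proceeds differently: after brute-forcing all-short solutions, it guesses the first $r+1$ vertices of a long solution path and deletes their closed neighbourhood except the last vertex, which becomes a new terminal; the deleted induced path contains $F$ ($F\ssi P_r$) anticomplete to the remainder, certifying that the residual graph is chair-free, since an induced chair there would combine with it into an induced $F+\mbox{chair}$.
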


\noindent
Comparing Theorems~\ref{thm:IDPnew} and~\ref{thm:k-IDP} shows that the problem becomes tractable for an infinite family of graphs~$H$ after fixing~$k$.  As the class of claw-free graphs is contained in the class of chair-free graphs, Theorem~\ref{thm:k-IDP} extends the aforementioned polynomial-time result of Fiala et al.~\cite{FKLP12} for claw-free graphs. Moreover, the case $H=C_6$ (the $6$-vertex cycle) fills a gap in the aforementioned result of Leveque et al.~\cite{LLMT09}. As we shall explain in Section~\ref{s-np}, the \NP-hardness construction relies on their gadget but also requires significant additional work. Before doing this we first prove the polynomial-time part of Theorem~\ref{thm:k-IDP} in Section~\ref{s-poly}.
Then, in Section~\ref{s-mainmain}, we prove Theorem~\ref{thm:k-IDP}.

In Section~\ref{s-pc} we consider the problem from a parameterized complexity viewpoint. Recall that Golovach et al.~\cite{GPV15} proved
 that {\sc Induced Disjoint Paths} is  \FPT\ for claw-free graphs when parameterized by the number $k$ of paths. We consider the class of $P_r$-free graphs. 
This gives us another natural parameter, namely~$r$. However, we show by adapting a construction of Haas and Hoffmann~\cite{HH06} that even {\sc $2$-Induced Disjoint Paths} is $\W[1]$-hard for $P_r$-free graphs when parameterized by~$r$.

In Section~\ref{s-con} we summarize our findings and give a number of relevant open problems. In particular we discuss some open problems on the parameterized complexity of {\sc Induced Disjoint Paths}.

\section{Polynomial-Time Algorithms}\label{s-poly}

In this section we prove the polynomial-time part of Theorem~\ref{thm:k-IDP}.
We first show the following general result that we will need as a lemma.

\begin{lemma}\label{l-sp1}
For every linear forest $F$, if the {\sc $k$-Induced Disjoint Paths} problem is polynomial-time solvable for $H$-free graphs for some graph~$H$, then it is so for $(F+H)$-free graphs.
\end{lemma}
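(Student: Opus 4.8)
The plan is to reduce the general statement to the case where $F$ is a single path and then to argue by induction on the length of that path, peeling off one endpoint at each step and exploiting the structure forced by an induced copy of $H$. First I would simplify $F$. Since $F$ is a linear forest with, say, $p$ vertices and $c$ components, it embeds as an induced subgraph of a single path: laying the components out along a path with exactly one unused vertex between consecutive components shows $F\ssi P_s$ for $s=p+c-1$. Consequently $F+H\ssi P_s+H$, so any graph containing $P_s+H$ also contains $F+H$, and by contraposition every $(F+H)$-free graph is $(P_s+H)$-free. Thus it suffices to prove the lemma for $F=P_t$ a single path, and I would induct on $t$, the base case $t=0$ (where $P_0$ is the empty graph and $(P_0+H)$-freeness is exactly $H$-freeness) being the hypothesis that $k$-IDP is polynomial on $H$-free graphs.

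The engine of the argument is a structural observation that is really the shadow of the same phenomenon throughout. If $G$ is $(P_t+H)$-free, we first test in polynomial time whether $G$ contains an induced $H$; if not, $G$ is $H$-free and we are done by assumption. Otherwise, for any induced $H$ on a constant-size set $W$, the non-neighbourhood $D=V(G)\setminus N[W]$ is anticomplete to $W$, so $G[D]$ contains no induced $P_t$ (else $G\si P_t+H$); hence $G[D]$ is $P_t$-free and its induced paths have fewer than $t$ vertices. Algorithmically this appears as a dichotomy on the solution itself: the solution $\mathcal P=P^1\cup\cdots\cup P^k$ is a linear forest with $k$ components, so either every $P^i$ has fewer than $t$ vertices---a \emph{short} solution, found by brute force in $n^{O(kt)}$ time---or some $P^i$ contains an induced $P_t$. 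In the latter case I would guess this induced $P_t$, call it $X$, together with how the solution threads through it; this is a constant-size guess. Since $X$ is an induced $P_t$ and $G$ is $(P_t+H)$-free, the set $V(G)\setminus N[X]$ is $H$-free, so the portion of the solution lying outside $N[X]$ falls within the scope of the assumed algorithm and the remaining difficulty is concentrated on the part of the solution meeting the neighbourhood $N[X]$.

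The main obstacle is exactly this glueing across the \emph{unbounded} set $N[X]$: a solution path may be long while lying entirely inside $N[X]$, so the $P_t$-free structure of the non-neighbourhood does not by itself bound the solution, and the $H$-free oracle only governs the region outside $N[X]$. This is where the induction on $t$ does the work. When $G$ is not $(P_{t-1}+H)$-free we may assume it contains an induced $P_{t-1}+H$, namely an induced $P_{t-1}$ on a set $A$ anticomplete to an induced $H$ on a set $B$, with $A\cup B$ of constant size. Because $G$ is $(P_t+H)$-free, no vertex of $V(G)\setminus N[B]$ can be attached to an endpoint of the $P_{t-1}$ on $A$ so as to create an induced $P_t$ that is still anticomplete to $B$; this ``no-extension'' property constrains the frontier along which long paths can travel. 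Making this frontier analysis precise---bounding how a mutually-induced solution can interact with the constant-size set $A\cup B$ and its neighbourhood, and using it to reduce a solution in $G$ to polynomially many instances that are either $(P_{t-1}+H)$-free (handled by the induction) or genuinely $H$-free (handled by the assumption)---is the technical heart of the proof.
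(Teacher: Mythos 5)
Your opening reduction (embedding the linear forest $F$ into a single path) matches the paper, which takes $r=2|V(F)|-1$ so that $F\ssi P_r$, and your short/long dichotomy on the solution paths is also how the paper begins. But the proposal has a genuine gap precisely where you place ``the technical heart'': after guessing an induced $P_t$, call it $X$, somewhere \emph{inside} a long solution path, you observe that $V(G)\setminus N[X]$ is $H$-free, yet you never give an argument for the part of the solution that meets the unbounded set $N[X]$ --- and this is not a routine detail, since entire long solution paths may live inside $N[X]$, out of reach of both the $H$-free oracle and any brute force. The proposed induction on $t$ does not close this gap either: from a guessed induced copy of $P_{t-1}+H$ you extract a ``no-extension'' property, but you never exhibit a concrete subinstance that is $(P_{t-1}+H)$-free, so the inductive hypothesis is never actually invoked on anything. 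As written, the proof does not go through.

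The missing idea --- which is essentially the paper's entire proof, and removes any need for induction on $t$ or frontier analysis --- is to \emph{anchor the guessed segment at a terminal}. After brute-forcing all solutions in which every path has at most $r+1$ vertices, guess which path $P^i$ is long and guess its first $r+1$ vertices $u_1,\ldots,u_{r+1}$ following $s_i=u_0$ (with each solution path assumed chordless, by shortcutting). Delete $N[\{s_i,u_1,\ldots,u_r\}]$ from $G$ except for $u_{r+1}$, and make $u_{r+1}$ the new terminal in place of $s_i$. This deletion is \emph{safe}: mutual inducedness forces every other solution path, and chordlessness forces the tail of $P^i$ beyond $u_{r+1}$, to avoid the deleted set, so the new instance is equivalent to the original one restricted to this guess. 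Moreover, the residual graph $G'$ is $H$-free: the vertices $s_i,u_1,\ldots,u_{r-1}$ induce a $P_r$ that is anticomplete to all of $G'$ (including $u_{r+1}$, again by chordlessness), so an induced $H$ in $G'$ would combine with it to give an induced $P_r+H$, hence an induced $F+H$, in $G$. A single oracle call per branch then finishes, with only $k\cdot O(n^{r+1})$ branches in total. Note that your internal-segment guess cannot be repaired this cheaply: deleting $N[X]$ for an internal segment $X$ would sever $P^i$ itself, which is exactly why the segment must be a prefix at a terminal.
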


\begin{proof}
Let $H$ be a graph such that {\sc $k$-Induced Disjoint Paths} is polynomial-time solvable for $H$-free graphs. Let $(G,T)$ be an instance of {\sc $k$-Induced Disjoint Paths}, where $G$ is an $(F+H)$-free graph on $n$ vertices and $T$ is a set of $k$ terminal pairs $(s_i,t_i)$. 

Let $r=2|V(F)|-1$. 
Note that $F$ is an induced subgraph of $P_r$.
We check in $O(n^{k(r+1)})$ time (by  brute force) if there exists a solution $(P^1,\ldots,P^k)$ for $(G,T)$ in which each path has at most $r+1$ vertices. As $k$ and $r$ are constants, this takes polynomial time.

Suppose we have not found a solution yet. Then if a solution $(P^1,\ldots,P^k)$ exists, at least one of the paths $P^i$ in it has $r+2$ or more vertices. 
We guess which path $P^i$ will have length at least $r$. This leads to $k$ branches. We guess the first $r+1$ vertices $u_1,\ldots,u_{r+1}$ on $P^i$ after $s_i=u_0$. This leads to $O(n^r)$ further branches. We remove $s_i,u_1,\ldots,u_r$ and all their neighbours from $G$, except for $u_{r+1}$. Let $G'$ be the resulting graph. In the pair $(s_i,t_i)$, we replace $s_i$ by $u_{r+1}$ to obtain a new instance $(G',T')$. As $F$ is an induced subgraph of $P_r$,  we have that $G'$ is $H$-free. Hence, by our assumption, we can solve {\sc $k$-Induced Disjoint Paths} on $(G',T')$ in polynomial time. As the total number of branches is polynomial, the total running time is polynomial.
\qed
\end{proof}

\noindent
We need two known results for proving the polynomial part of Theorem~\ref{thm:k-IDP} in Lemma~\ref{l-poly}.

\begin{theorem}[\cite{FKLP12}]\label{t-claw}
For every $k\geq 2$, {\sc $k$-Induced Disjoint Paths} is polynomial-time solvable for claw-free graphs.
\end{theorem}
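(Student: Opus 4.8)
The plan is to reduce {\sc $k$-Induced Disjoint Paths} on a claw-free graph to the ordinary {\sc $k$-Disjoint Paths} problem, which for fixed~$k$ is polynomial-time solvable by Robertson and Seymour~\cite{RS95}. The tool that drives the reduction is a local property of claw-free graphs that I would prove first: if $P=p_1\cdots p_m$ is an induced path and $v$ is a vertex not on~$P$, then the neighbours of~$v$ on~$P$ all lie in $\{p_a,p_{a+1}\}\cup\{p_{c-1},p_c\}$ for suitable indices $a\le c$. Indeed, if $v$ had three neighbours on~$P$ that are pairwise at distance at least~$2$ along~$P$, then, as $P$ is induced, those three neighbours would be pairwise non-adjacent and would form an induced~$K_{1,3}$ centred at~$v$, which is impossible. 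Thus every vertex meets an induced path in at most two clusters of at most two consecutive vertices, so interference between a path and the outside world is sharply localised.

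First I would dispose of short solutions: exactly as in the proof of Lemma~\ref{l-sp1}, for any fixed bound~$b$ one can decide in $O(n^{kb})$ time whether a solution exists in which every path has at most~$b$ vertices, and since $k$ is fixed this is polynomial. After branching over which paths are short and fixing those, it remains to connect the other terminal pairs by long mutually induced paths. I would then clean the terminals: since the independence number $\alpha(N(s_i))$ of each terminal neighbourhood is at most~$2$, the ways a path can leave a terminal are restricted, and by guessing a constant-length initial and final segment of each long path (giving polynomially many branches, again as in Lemma~\ref{l-sp1}) and deleting the guessed vertices together with all their neighbours except the new endpoints, we reduce to an instance whose terminals no longer interfere with the interior routing.

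The core is to turn this cleaned instance into an instance of {\sc $k$-Disjoint Paths}. Two conditions must be enforced that vertex-disjointness alone does not give: each path must be chordless, and there must be no edges between distinct paths. The first is easy to arrange, since any vertex-disjoint linkage can be shortcut so that each path is a shortest path within its region, which is automatically induced. The second is the genuine difficulty. The localisation lemma tells us how a single external vertex can touch one path, but it does not by itself bound how two long solution paths may run alongside each other, so the crux of the proof is to promote this local control to a global normalisation: one must show that every solution can be rerouted into a canonical solution in which the surviving between-path adjacencies are confined to the constantly many guessed terminal segments. Granting such a normalisation, the remaining task is a pure vertex-disjoint linkage problem on the auxiliary graph obtained from~$G$ by deleting the clusters that the lemma forbids, which the Robertson--Seymour algorithm solves in polynomial time; combining its output with the finitely many guesses then decides the original instance. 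I expect essentially all of the difficulty to reside in this normalisation step, that is, in proving that enforcing mutual non-adjacency costs only a bounded, guessable amount of structure.
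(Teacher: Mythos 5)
First, note that the paper contains no proof of this statement: Theorem~\ref{t-claw} is imported verbatim from~\cite{FKLP12}, so your attempt has to be measured against the published proof there, not against anything in this paper. Your localisation lemma is correct (neighbours of an external vertex on an induced path in a claw-free graph are covered by at most two pairs of consecutive path vertices --- a single run of length~$5$ or a run of length~$\geq 3$ plus a second run already yields three pairwise non-adjacent neighbours, i.e.\ a claw), the short-path preprocessing in the style of Lemma~\ref{l-sp1} is fine, and ending at the Robertson--Seymour algorithm~\cite{RS95} is indeed where the known proof also ends. But the proposal has a genuine gap, and you have located it yourself: the entire content of the theorem sits in the step you introduce with ``Granting such a normalisation.'' Worse, the normalisation is aimed in the wrong direction. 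A solution to {\sc $k$-Induced Disjoint Paths} has \emph{no} edges between distinct paths by definition, so there is nothing to reroute in a solution; the actual problem is the converse: a vertex-disjoint linkage returned by Robertson--Seymour on your auxiliary graph may carry many inter-path edges, and claw-freeness gives no shortcutting mechanism to remove them (shortcutting only makes individual paths chordless). Your plan to delete ``the clusters that the lemma forbids'' is circular, since the lemma constrains neighbours relative to a path you do not yet know; without knowing the solution there is no identifiable vertex set to delete. Concretely, a claw-free instance in which the only $(s_1,t_1)$-/$(s_2,t_2)$-linkage consists of two paths joined by a ladder of edges is a yes-instance of the auxiliary {\sc $k$-Disjoint Paths} problem but a no-instance of the original problem, so the reduction as described is not sound.

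It is also worth saying that the gap is not a small one to patch locally. The proof in~\cite{FKLP12} does not proceed by guessing terminal segments and rerouting; it relies on heavy structural analysis of claw-free graphs, reducing (after substantial cleaning) to line-graph-like cases, where mutually induced paths in a line graph $L(H)$ correspond exactly to vertex-disjoint paths in the preimage $H$ --- this correspondence is what makes the inter-path non-adjacency condition translate into plain vertex-disjointness, at which point Robertson--Seymour applies. In other words, the bridge from induced to non-induced linkage is a structure theorem, not the local claw-freeness observation; your lemma bounds how one vertex meets one path, but says nothing about how the graph must be transformed so that vertex-disjointness certifies mutual inducedness. As it stands, the proposal is an honest plan with the theorem's core left as an unproven (and, in the stated form, ill-posed) assumption.
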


\begin{theorem}[\cite{Al04}]\label{t-lp}
If a connected chair-free graph $G$ contains an induced claw and an induced path $P$ on at least eight vertices, then $G$ has a vertex adjacent to all vertices of $P$.
\end{theorem}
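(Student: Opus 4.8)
The plan is to argue by contradiction: assume no vertex of $G$ is adjacent to all of $p_1,\dots,p_n$, and manufacture an induced chair. The engine is a local analysis of how a single vertex outside $P$ can attach to $P$. The basic observation is that a chair can be read straight off the path: if $v\notin V(P)$ satisfies $v\sim p_i$ but $v\not\sim p_{i-1},p_{i+1},p_{i+2}$ (for indices in range), then $\{v,p_{i-1},p_i,p_{i+1},p_{i+2}\}$ induces a chair with centre $p_i$, the two pendant legs being $p_iv$ and $p_ip_{i-1}$ and the length-two leg being $p_ip_{i+1}p_{i+2}$, since $P$ is induced and $v$ misses the other four vertices; the mirror pattern $v\not\sim p_{i-2},p_{i-1},p_{i+1}$ is forbidden symmetrically. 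First I would push these two forbidden patterns to their conclusion: for every $v\notin V(P)$, the set $N(v)\cap\{p_2,\dots,p_{n-1}\}$ induces a subpath of $P$, i.e.\ the interior attachment indices $\{\,i : v\sim p_i,\ 2\le i\le n-1\,\}$ form a set of consecutive integers (an \emph{attachment interval}), with only the two end-vertices $p_1,p_n$ permitted to attach independently. This is a short induction, since a gap between two interior neighbours of $v$, together with the nearest end of the attachment set, always reproduces one of the two forbidden configurations once there is room on both sides, which is where $n\ge 8$ first enters.

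The role of the induced claw is to supply a vertex that cannot attach in such a restricted way; first I would rule out the degenerate possibility that $G$ is essentially just the path. Because $G$ contains an induced claw with centre $c$ and is connected, a shortest path $c=q_0,q_1,\dots,q_m=p_j$ from the claw to $V(P)$ exists; by minimality no $q_\ell$ with $\ell\le m-2$ has a neighbour on $P$. If $m=1$ the claw centre is adjacent to $P$ and is treated separately; otherwise set $w=q_{m-1}$ and $q=q_{m-2}$, so $w\sim p_j$, $q\sim w$, and $q$ is adjacent to no vertex of $P$. Now I would combine $w$, $q$ and the attachment interval $[\alpha,\beta]$ of $w$. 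If $\beta\ge\alpha+2$ then $w,p_\alpha,p_\beta,q$ form a claw at $w$ and $w\,p_\beta\,p_{\beta+1}$ (or $w\,p_\alpha\,p_{\alpha-1}$ at the other end) is a length-two leg, giving an induced chair; if the interval is a single interior vertex $\{j\}$ with $2\le j\le n-1$, then $\{q,w,p_{j-1},p_j,p_{j+1}\}$ is a chair with centre $p_j$. In every such case we reach a contradiction \emph{unless} the attachment interval is all of $[1,n]$, i.e.\ unless $w$ dominates $P$.

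Hence the only surviving attachments of a path-missing neighbour are the full interval $[1,n]$ and the degenerate endpoint-singletons $\{p_1\}$ or $\{p_n\}$, and I would use the three pairwise non-adjacent leaves of the claw to eliminate the degenerate case. The leaves force three independent directions out of $c$; routing each to $P$ through the shortest-path machinery yields attachment data that cannot all be endpoint-singletons, because two such routes ending at the same end of $P$, or at opposite ends, leave a length-two stretch $p_jp_{j+1}p_{j+2}$ of the interior uncovered (such a stretch must exist since $n\ge 8$ while each interval in play is proper), and that stretch furnishes the length-two leg of a chair whose centre is the relevant claw vertex and whose two pendant legs are two of the independent leaves. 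Assembling these pieces forces an actual interior attachment, which by the previous paragraph must extend to a dominating vertex, contradicting the assumption.

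The main obstacle is precisely this last step: the bookkeeping at the two ends of $P$ and the verification that, for \emph{every} way the claw can be connected to $P$, the endpoint-singleton configuration can be upgraded to an interior attachment or to an outright chair. The value $8$ is exactly what makes this work: after discarding the constant-size exceptional zones around $p_1$ and $p_n$ and the at most two attachment intervals under consideration, one must still retain a clean consecutive triple on which to hang the length-two leg, and $n\ge 8$ is the smallest length that guarantees this in the worst alignment. Checking that no placement of the attachment intervals can simultaneously block all candidate triples, across all cases of how the claw meets the path, is the genuine content of the argument.
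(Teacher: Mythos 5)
The paper does not actually prove Theorem~\ref{t-lp}: it is imported verbatim from Alekseev~\cite{Al04}, so your argument must stand on its own, and its first pillar is false. It is not true that in a connected chair-free graph satisfying the hypotheses every vertex $v\notin V(P)$ has interior attachments forming an interval. Take $P=p_1\cdots p_8$, a vertex $u$ adjacent to all of $p_1,\ldots,p_8$ and to $v$, and let $v$ be adjacent, besides $u$, to exactly $p_2,p_3,p_5,p_6$. This graph is connected, contains the induced claw on $\{u,p_1,p_3,p_5\}$ and the induced path $P$, and is chair-free: every claw in it is centred at $u$ (no other vertex has three pairwise non-adjacent neighbours; for instance $N(v)=\{u,p_2,p_3,p_5,p_6\}$ has independence number~$2$), and a chair centred at $u$ would require a vertex non-adjacent to $u$, of which there are none. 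Yet the attachment set $\{2,3,5,6\}$ of $v$ has a gap, and $v$ neither dominates $P$ nor is an endpoint-singleton; the conclusion of the theorem holds via $u$, which is exactly the point --- the statement asserts the existence of \emph{one} dominating vertex, not a neighbourhood constraint on \emph{every} vertex. Your proposed mechanism also fails on its own terms: a vertex adjacent to exactly $p_2,p_3,p_5,p_6$ matches neither of your two five-vertex patterns (indeed $V(P)\cup\{v\}$ is then even claw-free), and a vertex adjacent to everything except one interior $p_i$ likewise avoids both patterns while having a gap; so ``a gap always reproduces one of the two forbidden configurations'' is wrong, and the claimed ``short induction'' does not exist.

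The later stages inherit this and add gaps of their own. In the analysis of $w$ (the path-neighbour on a shortest path from the claw, with pendant $q$) you treat $\beta\ge\alpha+2$ and the interior singleton but omit $\beta=\alpha+1$: if $w$ is adjacent to exactly two consecutive vertices $p_j,p_{j+1}$, then $\{q,w,p_1,\ldots,p_n\}$ contains no claw at all (every candidate centre has an edge inside its neighbourhood), so no contradiction arises and $w$ need not dominate $P$. Hence the dichotomy ``full interval $[1,n]$ or endpoint-singleton'' that drives your final paragraph is false, and that final assembly --- which you yourself flag as the ``genuine content'' and only sketch, while the cases $m\le 1$ (claw centre adjacent to $P$, or lying on $P$ itself) are deferred and never treated --- cannot be completed as written. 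Your two five-vertex chair patterns are correct and are natural ingredients, but bridging from ``attachments of special vertices are cliques or near-intervals'' to ``a single vertex is adjacent to all eight vertices of $P$'' requires a genuinely different argument; this is presumably why the paper simply cites \cite{Al04} rather than reproving the theorem.
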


\begin{lemma}\label{l-poly}
Let $k\geq 2$. For every linear forest~$F$, {\sc $k$-Induced Disjoint Paths} is polynomial-time solvable for ($F+\mbox{chair})$-free graphs.
\end{lemma}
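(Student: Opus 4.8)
The plan is to invoke Lemma~\ref{l-sp1} with $H$ equal to the chair. Once we know that {\sc $k$-Induced Disjoint Paths} is polynomial-time solvable on chair-free graphs, Lemma~\ref{l-sp1} immediately upgrades this to $(F+\mbox{chair})$-free graphs for every linear forest $F$, which is exactly the statement. Hence the whole task reduces to the chair-free case, and that is precisely where Theorems~\ref{t-claw} and~\ref{t-lp} enter. So assume from now on that the input graph $G$ is chair-free. Since paths lying in distinct connected components of $G$ are automatically mutually induced, and each terminal pair must be routed inside a single component (otherwise the instance is trivially a no-instance), we may solve the problem independently on each component; hence we may assume $G$ is connected. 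If $G$ contains no induced claw, then $G$ is claw-free and we are done by Theorem~\ref{t-claw}. The remaining, and main, case is that $G$ is connected, chair-free, and contains an induced claw, so that Theorem~\ref{t-lp} applies: every induced path of $G$ on at least $8$ vertices has a vertex adjacent to all of its vertices, which we call a \emph{dominator}.

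For this case the plan is to branch on the ``shape'' of a hypothetical solution $(P^1,\ldots,P^k)$. Each $P^i$ is either \emph{short} (at most $7$ vertices) or \emph{long} (at least $8$ vertices). We guess the partition of $\{1,\ldots,k\}$ into short and long indices; we guess the complete vertex set of each short path by brute force ($O(n^7)$ choices per path, so $O(n^{7k})$ in total, which is polynomial for fixed $k$), and verify that the guessed short paths are induced and pairwise mutually induced. For every long index $i$ we apply Theorem~\ref{t-lp}: the unknown path $P^i$, being an induced path on at least $8$ vertices, has a dominator $v_i$, which we guess among the $n$ vertices (again polynomially many choices for fixed $k$). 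As a dominator is adjacent to every vertex of its path, this confines $P^i$ to $N(v_i)$. After deleting the closed neighbourhoods of the guessed short paths (to force non-adjacency with them), the surviving task is to route the long terminal pairs by mutually induced paths $P^i\subseteq N(v_i)$.

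The main obstacle is exactly this last step. A single $v_i$ may be adjacent to several long paths at once; indeed, a short chair argument shows that the domination is \emph{all-or-nothing}. Suppose $v_i$ is adjacent to a vertex $q$ of another solution path $P^j$ but not to a neighbour $q'$ of $q$ on $P^j$. Picking two non-adjacent vertices $a,b$ of $P^i$ (both adjacent to $v_i$), the set $\{v_i,a,b,q,q'\}$ induces a chair, with centre $v_i$, leaves $a,b$, and the pendant edge $qq'$: a contradiction. Hence $v_i$ either dominates the whole of $P^j$ or misses $P^j$ entirely. Consequently the long paths split into groups, each confined to the common neighbourhood of a single dominator, inside which they must still be found \emph{mutually induced}.

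It remains to solve this restricted {\sc Induced Disjoint Paths} instance on each $G[N(v_i)]$, and controlling it is the delicate part of the proof. The natural mechanism is that $G[N(v_i)]$ is again chair-free, so either it is claw-free and the group is finished by Theorem~\ref{t-claw}, or it contains an induced claw and Theorem~\ref{t-lp} lets us recurse into a strictly smaller neighbourhood; I expect to combine this with the all-or-nothing structure above. The hard part that I anticipate will need the most care is decoupling the different dominator groups (a path routed inside $N(v_i)$ must remain non-adjacent to paths routed outside it) and bounding the depth and branching of this recursion so that the overall running time stays polynomial for fixed $k$.
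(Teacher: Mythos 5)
Your proposal follows the paper's skeleton up to a point (reduce to chair-free graphs via Lemma~\ref{l-sp1}, split into short and long paths, brute-force the short ones and delete their closed neighbourhoods, then bring in Theorems~\ref{t-claw} and~\ref{t-lp}), and your ``all-or-nothing'' chair argument is correct as stated: if the dominator $v_i$ of $P^i$ sees a vertex $q$ of $P^j$ but not its path-neighbour $q'$, then $\{v_i,a,b,q,q'\}$ with non-adjacent $a,b\in V(P^i)$ induces a chair, since mutual inducedness kills all edges between $\{a,b\}$ and $\{q,q'\}$. But the proof is genuinely incomplete exactly where you say it is: after guessing a dominator $v_i$ for each long path, you are left with a \emph{joint} routing problem in which paths confined to different sets $N(v_i)$ must still be mutually non-adjacent to one another, and your proposed recursion (apply Theorem~\ref{t-lp} inside $G[N(v_i)]$, guess a new dominator, recurse into $N(v_i)\cap N(u)$) does not run in polynomial time as sketched: the neighbourhood need only shrink by one vertex per level, so the recursion can have depth $\Theta(n)$ with $\Theta(n)$ branching per level per path, giving $n^{\Theta(n)}$ branches. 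Nothing in the proposal bounds this, and the all-or-nothing claim does not decouple the groups.

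The observation you are missing, which is how the paper avoids dominators and recursion entirely, is that a dominator of a long solution path $P^i$ is in particular a \emph{common neighbour of $s_i$ and $t_i$}, and no common neighbour of $s_i$ and $t_i$ can lie on any solution path once all paths are long: it cannot lie on $P^i$ (an induced path on at least $8$ vertices has no vertex adjacent to both endpoints), and it cannot lie on $P^j$ for $j\neq i$ (it would be adjacent to $s_i\in V(P^i)$). So after the short paths are guessed and removed, one may safely delete every common neighbour of each remaining terminal pair. In the resulting graph, Theorem~\ref{t-lp} flips from a tool you must exploit into an impossibility: any connected component containing an induced claw would force a dominating vertex for each long path it hosts, and such vertices no longer exist, so claw-containing components certify that the branch can be discarded, while claw-free components are handled directly by Theorem~\ref{t-claw}. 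This one preprocessing step eliminates both of the difficulties you flag (cross-group non-adjacency and recursion depth); without it, or some substitute argument, your proof does not go through.
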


\begin{proof}
By Lemma~\ref{l-sp1}, it remains to consider chair-free graphs.
Let $(G,T)$ be an instance of {\sc Induced Disjoint Paths}, where $G$ is a chair-free graph on $n$ vertices and $T=\{(s_1,t_1),\ldots,(s_k,t_k)\}$ is a set of terminal pairs. 
Let $(P^1,\ldots,P^k)$ be a solution for $(G,T)$ (if it exists). We call a path $P^i$ {\it long} if it has at least eight vertices; else we call it {\it short}.
We first guess which of the paths of a solution for $(G,T)$ will be short. There are $2^k$ options for doing this, which is a constant number as $k$ is a constant. We will consider each of these options one by one.

Suppose we consider the option where $T'\subseteq T$ is the subset of terminal pairs that will be in short solution paths. Let $|T'|=k'\leq k$.
We guess all $O(n^{5k'})=O(n^{5k})$ options of choosing the inner vertices of the solution paths for the terminal pairs in $T'$. We discard an option if two of the guessed solution paths contain an edge between them or if a guessed solution path contains a vertex with a neighbour in some $(s_i,t_i)\notin T'$. Otherwise, we continue as follows.

We first delete all vertices of the guessed solution paths and also their neighbours from $G$. We denote the new instance by $(G,T)$ again and also write $T=\{(s_1,t_1),\ldots, (s_k,t_k)\}$. Assuming our guess was correct, $(G,T)$ only has solutions $(P^1,\ldots,P^k)$ in which each $P^i$ is long. Hence, from $G$, we can safely remove for every $i\in \{1,\ldots,k\}$, every vertex that is adjacent to both $s_i$ and $t_i$.

We now check in polynomial time if there are two terminal $s_i$ and $t_i$ that belong to different connected components of the resulting graph $G'$. If so, then we can discard this branch. Else, we let $(G'_1,T'_1),\ldots, (G'_r,T'_r)$ be the connected components of $G'$, together with  the terminal pairs subsets of $T$ they contain. 

We consider each $(G'_j,T'_j)$ as a separate instance. 
If $G_j'$ has an induced claw, consider a path $P^i$ in a solution. As $P^i$ must be long, Theorem~\ref{t-lp} tells us that $G'_j$ must contain a vertex adjacent to all vertices of $P^i$.
However, by construction, $G'_j$ contains no vertices adjacent to both $s_i$ and $t_i$ which both belong to $P^i$, a contradiction. We now check in polynomial time if $G_j'$ is claw-free. If it is not, then we may discard the branch, as just argued. Otherwise, we apply Theorem~\ref{t-claw} to check in polynomial time if $(G_j',T_j')$ has a solution. If for some $(G_j',T_j')$ no solution exists, then we move to the next branch; otherwise, we return a yes-answer.

As the number of branches is polynomial and processing each branch takes polynomial time, the total running time of our algorithm is polynomial. 
\qed
\end{proof}

\section{NP-Completeness Results}\label{s-np}

In this section we prove the \NP-completeness part of Theorem~\ref{thm:k-IDP} (see Section~\ref{s-mainmain} for details on how we combine the several hardness results proven in this section).

We first prove the \NP-completeness part of Theorem~\ref{thm:k-IDP}.
We base our proof on a hardness result of Leveque et al.~\cite{LLMT09} for $2$-{\sc Induced Cycle}, which is to decide if a graph has an induced cycle containing two pre-specfied vertices $x$ and $y$ (we assume without loss of generality that the induced cycle is a \emph{hole}, meaning it has at least four vertices). Namely, we derive the following relation.
 
\begin{lemma}\label{c-icred}
An instance $(G,x,y)$ of $2$-{\sc Induced Cycle}, where $x$ and $y$ have degree~$2$, can be transformed in polynomial time into an instance of $2$-{\sc Induced Disjoint Paths} on a graph~$G'$. Any vertex that is introduced has degree at most~$3$ and its incident edges can be subdivided an arbitrary number of times.
\end{lemma}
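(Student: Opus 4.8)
The plan is to realise a hole through $x$ and $y$ as a pair of mutually induced paths by cutting the prospective cycle open at $x$ and at $y$. Write $a_1,a_2$ for the two neighbours of $x$ and $b_1,b_2$ for the two neighbours of $y$. Since $x$ and $y$ have degree~$2$, every hole through them must use all four edges $xa_1,xa_2,yb_1,yb_2$, so deleting $x$ and $y$ breaks such a hole into two internally disjoint, chordless, and mutually non-adjacent arcs, one starting at $a_1$ and one at $a_2$, whose other endpoints are $b_1$ and $b_2$ in some order; conversely, any two such arcs in $G-\{x,y\}$ can be closed up with $x$ and $y$ into a hole. I would therefore build $G'$ from $G-\{x,y\}$ by attaching two new degree-$1$ vertices $s_1\sim a_1$ and $s_2\sim a_2$ at the $x$-end, placing a gadget at the $y$-end, and taking terminal pairs $(s_1,t_1)$ and $(s_2,t_2)$.

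The one genuine difficulty is that the arc starting at $a_1$ may end at either $b_1$ or $b_2$, so the $y$-end gadget must allow \emph{both} matchings $\{b_1\to t_1,b_2\to t_2\}$ and $\{b_1\to t_2,b_2\to t_1\}$. I would implement this with a four-cycle crossover: introduce new vertices $t_1,t_2,q_1,q_2$, joined in a cycle in the cyclic order $t_1,q_1,t_2,q_2$, and add the edges $b_1q_1$ and $b_2q_2$. To reach its terminal a solution path must enter the gadget through $b_1q_1$ or $b_2q_2$, and as these are the only edges between $G-\{x,y\}$ and the gadget the two paths are forced to use $b_1$ and $b_2$, one each; a path then runs along one of the two cycle-arcs joining its entry corner to its terminal corner, which is exactly what services both matchings. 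The role of the degree/subdivision clause now becomes visible: in the bare four-cycle the two routes used in a given matching meet at a gadget edge, so I would subdivide each of the four cycle-arcs (at least once, and then arbitrarily often), making the two simultaneously used arcs internally disjoint \emph{and} mutually non-adjacent. Every introduced vertex then has degree at most~$3$ (only $q_1,q_2$ reach degree~$3$), and subdividing any introduced edge further merely lengthens a route and creates no new adjacency, hence preserves the answer.

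It remains to verify the two directions. Given a hole, I delete $x,y$, orient its two arcs, route each to the matching terminal along the short gadget-arc, and check that the prescribed subdivisions keep the two resulting paths mutually induced, yielding a solution. For the converse, given mutually induced $s_1$–$t_1$ and $s_2$–$t_2$ paths, I discard their gadget parts and keep the two body-arcs from $a_1$ and $a_2$ to $\{b_1,b_2\}$; since the body of $G'$ is an induced subgraph of $G$, these arcs are chordless and mutually non-adjacent in $G$, so adding back $x$ and $y$ (each adjacent only to its two neighbours) closes them into a chordless cycle, and the degree-$2$ assumption forces this cycle to be an induced one through $x$ and $y$ on at least four vertices. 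The main obstacle I anticipate lies entirely in the gadget analysis: proving that the subdivided crossover realises \emph{every} hole (covering both matchings) while admitting \emph{no} spurious solution — in particular that no path can shortcut through the gadget from $b_1$ to $b_2$ and thereby block the other path, and that the two entry edges $b_1q_1,b_2q_2$ genuinely force both $b_1$ and $b_2$ onto the solution. The body-arc correspondence, the degree bound, the subdivision-robustness, and the polynomial size and running time are then routine.
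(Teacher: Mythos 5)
Your proposal is correct and is essentially the paper's own approach: your subdivided four-cycle crossover with entry edges $b_1q_1,b_2q_2$ is isomorphic to the paper's gadget, which is an eight-cycle $s_1,p_1,q_1,r_1,s_2,r_2,q_2,p_2$ with spokes $q_1x_1$ and $q_2x_2$ (the $p$'s and $r$'s being exactly your subdivision vertices), with $q_1,q_2$ as the forced bottlenecks. The only deviation is that the paper installs this crossover at \emph{both} $x$ and $y$, whereas you attach pendant terminals at the $x$-end and place a single crossover at the $y$-end; this works just as well, since the freedom to realise both matchings of arcs to terminal pairs is needed at only one end, and it preserves the degree and subdivision properties equally.
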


\begin{proof}
Let $x$ and $y$ have neighbours $x_1$, $x_2$ and $y_1$, $y_2$ respectively. We replace $x$ and its incident edges by the following gadget. Create vertices $p_1$, $q_1$, $r_1$, $p_2$, $q_2$, $r_2$, $s_1$, $s_2$. Add edges 
 $s_1p_1, p_1q_1,q_1x_1,
q_1r_1, r_1s_2$ and 
$s_2r_2, r_2q_2, q_2x_2,
p_2s_1, p_2q_2$. Observe that the paths $s_1,p_1,q_1,x_1$ and $s_2,r_2,q_2,x_2$ are mutually induced. Similarly, the paths $s_1,p_2,q_2,x_2$ and $s_2,r_1,q_1,x_1$ are mutually induced. Moreover, these are the only two options that can co-exist, in the sense that a path originating in $s_1$ or $s_2$ that uses only edges of this gadget, does not have $s_1$ or $s_2$ as an internal vertex, and goes to $x_1$ ($x_2$) has to pass through $q_1$ ($q_2$).
In a similar manner, we replace $y$ and its incident edges by vertices $a_1,b_1,c_1,a_2,b_2,c_2,t_1,t_2$ and edges $a_1t_1, a_1b_1, b_1y_1, b_1c_1, c_1t_2$ and $a_2t_1, a_2b_2, b_2y_2, b_2c_2, c_2t_2$. 

We call the resulting graph~$G'$. Then, using the preceding argument, $G$ has a hole containing $x$ and $y$ if and only if $G'$ has mutually induced paths between $s_1$ and $t_1$ and between $s_2$ and $t_2$.
Note that any vertex that is introduced has degree at most~$3$ and any of its incident edges can be subdivided an arbitrary number of times without affecting the correctness of the reduction ($q_1,q_2,b_1,b_2$ remain bottlenecks).
\qed
\end{proof}

\noindent
Our first two results require a single change to the construction of~\cite{LLMT09}. 
The first rectifies a potential issue with the same claim made in~\cite{GPV15} by using Lemma~\ref{c-icred}.

\begin{lemma}\label{l-k14}
$2$-{\sc Induced Disjoint Paths} is \NP-complete for $K_{1,4}$-free graphs.
\end{lemma}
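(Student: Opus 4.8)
The plan is to reduce from $2$-{\sc Induced Cycle} and to route the reduction through Lemma~\ref{c-icred}, exploiting the fact that the only way to create an induced $K_{1,4}$ is to have a vertex with four pairwise non-adjacent neighbours; in particular, any graph of maximum degree~$3$ is automatically $K_{1,4}$-free. Membership in \NP\ is immediate, since a solution consists of two paths that can be verified to be mutually induced in polynomial time, so all the work lies in the hardness reduction.

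First I would recall the \NP-hardness construction of Leveque et al.~\cite{LLMT09} for $2$-{\sc Induced Cycle}, i.e.\ deciding whether a graph has a hole through two prescribed vertices $x$ and $y$. The single change to their construction is to arrange that $x$ and $y$ both have degree exactly~$2$; this is precisely what Lemma~\ref{c-icred} requires as input, and it is also where the ``potential issue'' in the analogous claim of~\cite{GPV15} is repaired, since we now feed the instance through the explicit bounded-degree gadget of Lemma~\ref{c-icred} rather than arguing about $K_{1,4}$-freeness directly. I would then verify that, after this change, the graph $G$ produced by~\cite{LLMT09} has maximum degree~$3$ everywhere---this should already hold for their gadgets, with $x$ and $y$ being the only vertices that need adjusting---and that forcing $x$ and $y$ to degree~$2$ does not destroy the equivalence between satisfiability of the source instance and the existence of a hole through $x$ and $y$.

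Next I would apply Lemma~\ref{c-icred} to $(G,x,y)$ to obtain an instance $(G',(s_1,t_1),(s_2,t_2))$ of $2$-{\sc Induced Disjoint Paths} such that $G$ has a hole through $x$ and $y$ if and only if $G'$ has the two required mutually induced paths. The crucial point is the degree bookkeeping: every vertex introduced by the gadget has degree at most~$3$ by the guarantee of Lemma~\ref{c-icred}, while every surviving original vertex of $G$ keeps its degree, because replacing $x$ and $y$ only re-attaches the former neighbours $x_1,x_2,y_1,y_2$ to the new bottleneck vertices $q_1,q_2,b_1,b_2$. Hence $G'$ again has maximum degree~$3$, so no vertex of $G'$ can have four pairwise non-adjacent neighbours, and $G'$ is $K_{1,4}$-free. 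Combined with the correctness statement of Lemma~\ref{c-icred} and the \NP-hardness of $2$-{\sc Induced Cycle}, this yields \NP-completeness on $K_{1,4}$-free graphs.

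The main obstacle I anticipate is essentially bookkeeping about~\cite{LLMT09}: one must open up their construction to confirm that all gadget vertices have degree at most~$3$ and that the single modification making $x$ and $y$ degree-$2$ terminals is both achievable and correctness-preserving. In contrast to the companion result for $C_r$-free graphs, the subdivision freedom granted by Lemma~\ref{c-icred} is not needed here, because subcubicity alone already forbids an induced $K_{1,4}$; subdivision will only be invoked later, when cycle lengths have to be controlled.
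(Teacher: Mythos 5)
Your proposal is correct and takes essentially the same route as the paper: cite the result of L\'ev\^eque et al.~\cite{LLMT09} that $2$-{\sc Induced Cycle} is \NP-complete on graphs of maximum degree~$3$ whose distinguished vertices have degree~$2$, apply the gadget of Lemma~\ref{c-icred}, and observe that the resulting graph still has maximum degree~$3$ and is hence $K_{1,4}$-free. The only difference is that you spell out degree bookkeeping the paper leaves implicit (and note, correctly, that the subdivision freedom of Lemma~\ref{c-icred} is not needed here); since~\cite{LLMT09} already provides degree-$2$ terminals, no further modification of their construction is required.
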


\begin{proof}
Leveque et al.~\cite{LLMT09} prove that $2$-{\sc Induced Cycle} is \NP-complete on graphs of maximum degree~$3$ where the distinguished vertices have degree~$2$. Apply the reduction of Lemma~\ref{c-icred}. The graph has maximum degree~$3$; thus, it is $K_{1,4}$-free.
\qed\end{proof}

\begin{lemma}\label{l-girth}
For every $s\geq 3$ with $s\neq 6$, {\sc $2$-Induced Disjoint Paths} is \NP-complete for $C_s$-free graphs.
\end{lemma}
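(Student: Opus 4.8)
The plan is to reduce from $2$-{\sc Induced Cycle} through Lemma~\ref{c-icred}, exactly as in the proof of Lemma~\ref{l-k14}, but this time exploiting the freedom to subdivide edges in order to control the lengths of all induced cycles so that none has length exactly~$s$. Membership in \NP\ is immediate, since a pair of paths is a certificate that can be checked to be mutually induced in polynomial time. For hardness I would again invoke the result of Leveque et al.~\cite{LLMT09} that $2$-{\sc Induced Cycle} is \NP-complete on graphs of maximum degree~$3$ whose distinguished vertices $x,y$ have degree~$2$, and the new ingredient is to force the graph produced by Lemma~\ref{c-icred} to be $C_s$-free.

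The steps I would carry out are: (i) take the instance $(G,x,y)$ from~\cite{LLMT09}, whose construction is assembled from bounded-size gadgets joined by induced paths, and lengthen (subdivide) the connecting induced paths so that every induced cycle of $G$ other than those lying inside a single gadget has length greater than $s$; subdividing along an induced path is safe because the new vertices have degree~$2$ and no chord is destroyed, so holes through $x$ and $y$ are preserved. (ii) Apply the reduction of Lemma~\ref{c-icred} to obtain the graph $G'$, so that $G$ has a hole through $x$ and $y$ if and only if $G'$ is a yes-instance of $2$-{\sc Induced Disjoint Paths}. (iii) Using the subdivision freedom guaranteed by Lemma~\ref{c-icred}, subdivide every edge incident to an introduced vertex a common number $t>s$ of times; since the bottlenecks $q_1,q_2,b_1,b_2$ are preserved, correctness is untouched, and any induced cycle meeting one of these gadgets now uses a subdivided edge and hence has length greater than $s$. (iv) Conclude that every induced cycle of $G'$ of length at most $s$ must lie entirely inside a single bounded-size gadget of the base construction, so that $C_s$-freeness of $G'$ reduces to checking those finitely many fixed gadgets.

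The main obstacle, and the reason $s=6$ must be excluded, is precisely step~(iv): the base gadgets of~\cite{LLMT09} force an induced six-cycle, which is the shortest unavoidable hole and which encodes the binary choice driving the reduction, so it cannot be lengthened without destroying the gadget's function. For every $s\neq 6$ this forced $C_6$ is harmless because it is not a $C_s$, while all other induced cycles have been made longer than $s$, so $G'$ is $C_s$-free and the reduction establishes \NP-completeness. For $s=6$, however, the forced $C_6$ cannot be removed and this simple cycle-length argument breaks down; that is exactly the case filled by the separate and more elaborate construction carried out later in Section~\ref{s-np}. I therefore expect the delicate part to be the structural verification that, apart from this single forced hexagon, all induced cycles of the combined graph can be made to avoid length $s$ while the correctness of both the base reduction and the Lemma~\ref{c-icred} gadget is maintained.
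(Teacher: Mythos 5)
Your pipeline (a Leveque et al.\ hardness result, then Lemma~\ref{c-icred}, then subdividing the gadget edges so that no induced $C_s$ is created) matches the paper's, but you start from the wrong base theorem, and this opens a genuine gap. The paper does not take the maximum-degree-$3$ instances used in Lemma~\ref{l-k14} and re-engineer them; it directly cites the result of Leveque et al.~\cite{LLMT09} that {\sc $2$-Induced Cycle} is already \NP-complete on \emph{$C_s$-free} graphs for every $s\geq 3$ with $s\neq 6$, with the two distinguished vertices of degree~$2$. With that starting point, the only remaining check is that the eight new vertices of Lemma~\ref{c-icred} and their incident edges introduce no induced $C_s$, which the arbitrary-subdivision clause of that lemma handles (your step~(iii) is exactly right on this point). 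Your plan instead undertakes to establish $C_s$-freeness of the entire base construction from scratch: step~(iv), namely that every induced cycle of length at most $s$ can be confined to a bounded gadget and that the only unavoidable short hole is a $C_6$, is precisely the content of the theorem of~\cite{LLMT09} that the paper invokes, and in your proposal it is asserted (``I expect the delicate part to be\ldots'') rather than proved. Carrying it out would require the detailed gadget-by-gadget analysis that Leveque et al.\ themselves perform, so as written the proof is incomplete exactly where all the work lies. Your instinct about $s=6$ is nonetheless correct: the paper remarks that the construction of~\cite{LLMT09} omits all cycles other than $C_6$, and that this lacuna is what the separate $(C_6,H_\ell)$-free construction of Lemma~\ref{l-h} remedies.

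A secondary issue is the blanket claim in step~(i) that subdividing is ``safe because the new vertices have degree~$2$ and no chord is destroyed, so holes through $x$ and $y$ are preserved.'' Subdivision does not in general preserve no-instances: subdividing an edge of $K_4$ creates a hole where none existed, so the backward direction of the equivalence needs the specific structure that the subdivided edges lie on induced paths whose internal vertices have degree~$2$ (or, inside the gadget of Lemma~\ref{c-icred}, the bottleneck property of $q_1,q_2,b_1,b_2$). You use this implicitly but never state it; the paper's proof sidesteps the point entirely, since the base instance is taken $C_s$-free as given and only the new gadget edges are ever subdivided. The repair is simple: replace your steps~(i) and~(iv) by a citation of the $C_s$-free version of the Leveque et al.\ hardness theorem, after which your steps~(ii) and~(iii) coincide with the paper's proof.
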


\begin{proof}
Leveque et al.~\cite{LLMT09} proved that $2$-{\sc Induced Cycle} is \NP-complete on $C_s$-free graphs for every $s\geq 3$ with $s\neq 6$, where the two distinguished vertices have degree~$2$. We apply Lemma~\ref{c-icred} to reduce to $2$-{\sc Induced Disjoint Paths}. Since we can subdivide any number of times the edges incident on the newly created vertices, we can ensure that no induced $C_s$ is created in the final instance.
\qed
\end{proof}

\noindent
Our third result requires a significant overhaul of the construction~in~\cite{LLMT09}. 

\medskip
\noindent
{\bf 3.1. Omitting ``H''-graphs and Six-Vertex Cycles.}
\noindent
Let $H_1$ be the ``H''-graph on six vertices formed by an edge joining the middle vertices of two paths on three vertices. For $\ell\geq 2$, let $H_\ell$ be the graph obtained from $H_1$ by subdividing the crossing edge (which is the edge whose endpoints both have degree~$3$) $\ell-1$ times. 
See Fig.~\ref{fig:H1+H3} for two examples.

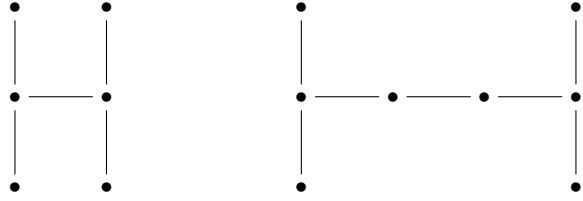
\begin{figure}
\begin{center}
$
\xymatrix{
\bullet  \ar@{-}[d] & \bullet   \ar@{-}[d] \\
\bullet  \ar@{-}[d] \ar@{-}[r] & \bullet   \ar@{-}[d] \\
\bullet   & \bullet  \\
}
$
\hspace{2cm}
$
\xymatrix{
\bullet  \ar@{-}[d] & & & \bullet   \ar@{-}[d] \\
\bullet  \ar@{-}[d] \ar@{-}[r] & \bullet  \ar@{-}[r] & \bullet  \ar@{-}[r] & \bullet   \ar@{-}[d] \\
\bullet  & & & \bullet   \\
}
$
\end{center}
\caption{A drawing of $H_1$ (left) and $H_3$ (right).}
\label{fig:H1+H3}
\end{figure}

We prove that for every $\ell\geq 1$, {\sc $2$-Induced Disjoint Paths} is \NP-complete for $(C_6,H_\ell)$-free graphs.
To this end, we consider the hardness reduction by Leveque et al.~\cite{LLMT09} for $2$-{\sc Induced Cycle} in more detail. 
We very closely follow their notation and the proof of our main Lemma~\ref{l-h} mimics the proof of their Lemma 2.6.
We show how their construction can be modified so that it becomes $H_\ell$-free for any fixed $\ell \geq 1$ and $C_6$-free. 

Let $\phi$ be an instance of {\sc $3$-Satisfiability} consisting of $m$ clauses $C_1,\ldots,C_m$ on $n$ variables $z_1,\ldots,z_n$. For each 
clause 
$C_j$ of the form $y_{3j-2}\vee y_{3j-1} \vee y_{3j}$ then $y_i$, $i \in [3m]$, is a literal from $\{z_1,\ldots,z_n,\overline{z}_1,\ldots,\overline{z}_n\}$. 
Let $\ell\geq 1$ be given. We will construct a graph $G_\phi^\ell$ with two specified vertices~$x$ and $y$ of degree~$2$ so that~$G_\phi^\ell$ has a hole containing $x$ and $y$ if and only if there is a truth assignment satisfying~$\phi$.

\begin{figure}
\[
\xymatrix{
& \alpha^{1+} \ar@/_1.5pc/@{-}[ddddd] \ar@/^/@{-}[dddddr] \ar@{-}[dd] \ar@{-}[ddr] \ar@{-}[r] & \alpha^{1++} \ar@{-}[dd] \ar@{-}[ddl]  \ar@/_/@{-}[dddddl]  \ar@/^1.5pc/@{-}[ddddd]  \ar@{--}[r] & \alpha^{2+} \ar@{-}[r] & \alpha^{3+} \ar@{--}[r]  & \alpha^{4++} \ar@/_1.5pc/@{-}[ddddd] \ar@/^/@{-}[dddddr] \ar@{-}[dd] \ar@{-}[ddr] \ar@{-}[r] & \alpha^{4+} \ar@{-}[dd] \ar@{-}[ddl]  \ar@/_/@{-}[dddddl]  \ar@/^1.5pc/@{-}[ddddd] \ar@{-}[dr] \\
\alpha \ar@{-}[ur] \ar@{-}[dr] & & & & & & &  \alpha' \\
& \alpha^{1-} \ar@{-}[d] \ar@{-}[dr] \ar@{-}[r] & \alpha^{1--} \ar@{-}[d] \ar@{-}[d]  \ar@{-}[dl] \ar@{--}[r] &  \alpha^{2-} \ar@{-}[r] & \alpha^{3-} \ar@{--}[r]  & \alpha^{4--} \ar@{-}[d] \ar@{-}[dr]  \ar@{-}[r] & \alpha^{4-} \ar@{-}[d] \ar@{-}[dl] \ar@{-}[ur] \\
& \beta^{1+} \ar@{-}[dd] \ar@{-}[ddr] \ar@{-}[r] & \beta^{1++} \ar@{-}[dd] \ar@{-}[ddl] \ar@{--}[r] & \beta^{2+} \ar@{-}[r] & \beta^{3+} \ar@{--}[r]  & \beta^{4++} \ar@{-}[r] \ar@{-}[dd] \ar@{-}[ddr]   & \beta^{4+} \ar@{-}[dd] \ar@{-}[ddl] \ar@{-}[dr] \\
\beta \ar@{-}[ur] \ar@{-}[dr] & & & & & & &  \beta' \\
& \beta^{1-} \ar@{-}[r] & \beta^{1--}  \ar@{--}[r] &  \beta^{2-} \ar@{-}[r] & \beta^{3-} \ar@{--}[r]  & \beta^{4--} \ar@{-}[r] & \beta^{4-} \ar@{-}[ur] \\
}
\]
\caption{The literal gadget 
(dashed lines indicate paths of length $\ell$).}
\label{fig:2IDP-literal}
\end{figure}
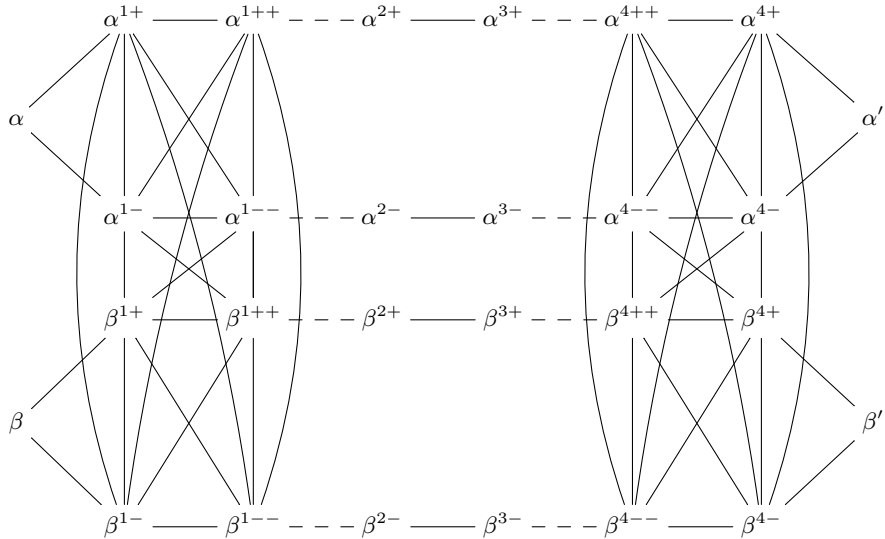

\begin{figure}
\[
\xymatrix{
& \alpha^{1-} \ar@{-}[r] & \alpha^{1--} \ar@{--}[r] &  \alpha^{2-} \ar@{-}[r] & \alpha^{3-} \ar@{--}[r]  & \alpha^{4--} \ar@{-}[r] & \alpha^{4-}  \\
& \beta^{1-}  \ar@{-}[r] & \beta^{1--} \ar@{--}[r] & \beta^{2-} \ar@{-}[r] & \beta^{3-} \ar@{--}[r]  & \beta^{4--} \ar@{-}[r]   & \beta^{4-}  \\
& & & c^{1+} \ar@{-}[u] \ar@{-}[uur] \ar@{-}[ur] \ar@/^1pc/@{-}[uu] & c^{1-} \ar@{-}[uul] \ar@{-}[u] \ar@{-}[ul] \ar@/_1pc/@{-}[uu] & & & \\
}
\]

\[
\xymatrix{
& & c^{1+} & c^{1-} \ar@{--}[dr] & & \\
& c^{12+} \ar@{--}[dr] \ar@{--}[ur] & & & c^{12-} \\
c^{0+}  \ar@{--}[drr] \ar@{--}[ur] & &  c^{2+} & c^{2-} \ar@{--}[ur]  & & c^{0-} \ar@{--}[dll] \ar@{--}[ul]   \\
& & c^{3+} &  c^{3-} & & \\
}
\]
\caption{The clause gadget 
together with its interface with the literal gadget (drawn above). Dashed lines indicate paths of length $\ell$.}
\label{fig:2IDP-clause}
\end{figure}
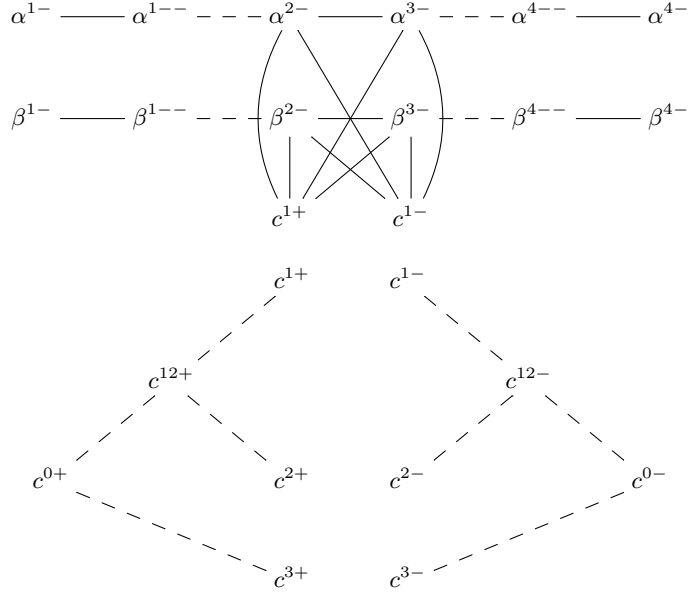

For each literal $y_j$, prepare a graph $G^\ell(y_j)$ as drawn in Fig.~\ref{fig:2IDP-literal} where the corresponding labelled vertices inherit a subscript $j$. Numerous vertices on paths will remain unlabelled. Our literal gadget is more elaborate than that in \cite{LLMT09} as we need to
forbid, as induced subgraphs, the $C_6$ and for any fixed $\ell$, every $H_\ell$.
The idea is that two induced disjoint paths may be drawn through this gadget either crossing the edges $(\alpha^{2+},\alpha^{3+})$ and $(\beta^{2+},\beta^{3+})$; or $(\alpha^{2-},\alpha^{3-})$ and $(\beta^{2-},\beta^{3-})$. All other possibilities are forbidden.

For each clause $C_j$, prepare a graph $G^\ell(C_j)$ as drawn in the bottom of Fig.~\ref{fig:2IDP-clause} where the corresponding labelled vertices inherit a subscript $j$. Numerous vertices on paths will remain unlabelled. Our clause gadget is exactly the same as in \cite{LLMT09} except we replaced the edges by paths.
The idea is that a path may be drawn through this gadget in precisely one of three ways selecting the literal that is true.

For each variable $z_i$, prepare a graph $G^\ell(z_i)$ as in Fig.~\ref{fig:2IDP-variable} consisting of two internally disjoint paths $P^+_i$ (top) and $P^-_i$ (bottom). The idea in Fig.~\ref{fig:2IDP-variable} is that full edges and dashed edges alternate on this diagram and the length is enough for $m$ full edges. The end points of the full edges are labelled $(p_{i,1}^+,p_{i,1}^{++})$, \ldots, $(p_{i,2m}^+,p_{i,2m}^{++})$ on the top; and $(p_{i,1}^-,p_{i,1}^{--})$, \ldots, $(p_{i,2m}^-,p_{i,2m}^{--})$ on the bottom. Our variable gadget is exactly the same as in \cite{LLMT09} except we lengthened some paths.
The idea is that a path may be drawn through this gadget in precisely one of two ways selecting whether the variable is evaluated true or false.

The final graph $G_\phi^\ell$ is constructed in a manner similar to Leveque et al.~\cite{LLMT09} from the disjoint union of all the graphs $G^\ell(y_j)$ (literals), $G^\ell(C_j)$ (clauses) and $G^\ell(x_i)$ (variables) with the modifications as below.
We indicate specifically where the modifications go beyond the construction of Lemma 2.6 in \cite{LLMT09}. The top of Fig.~\ref{fig:2IDP-clause} shows how a clause gadget interacts with a literal gadget. Note that a variable gadget interacts with a clause gadget in a similar way.

\begin{enumerate}
\item In \cite{LLMT09}, for $j=1,\ldots,3m-1$, they added the edges $\alpha'_j\alpha_{j+1}$ and $\beta'_j\beta_{j+1}$. We will instead add paths of length $\ell$ in place of these edges.
\item In \cite{LLMT09}, for $j=1,\ldots,m-1$, they added the edges $c^{0-}_jc^{0+}_{j+1}$.  We will instead add paths of length $\ell$ in place of these edges.
\item In \cite{LLMT09}, for $i=1,\ldots,m-1$, they add the edges $d^{-}_id^{+}_{i+1}$. We will instead add paths of of length $\ell$  in place of these edges.
\item For $i=1,\ldots,n$, let $y_{n_1},\ldots y_{n_{z^-_i}}$ be the occurrences of $\overline{z}_i$ over all literals.~We have slightly different vertex names from \cite{LLMT09}. For $j=1,\ldots,z^-_i$, delete the edge $p^+_{i,j}p^{++}_{i,j}$ and add the four edges $p^{+}_{i,j}\alpha^{2+}_{n_j}$, $p^{+}_{i,j}\beta^{2+}_{n_j}$, $p^{++}_{i,j}\alpha^{3+}_{n_j}$, $p^{++}_{i,j}\beta^{3+}_{n_j}$. 
Additionally to these edges, which were in \cite{LLMT09}, we also add: $p^{+}_{i,j}\alpha^{3+}_{n_j}$, $p^{+}_{i,j}\beta^{3+}_{n_j}$, $p^{++}_{i,j}\alpha^{2+}_{n_j}$, $p^{++}_{i,j}\beta^{2+}_{n_j}$.
\item For $i=1,\ldots,n$, let $y_{n_1},\ldots y_{n_{z^+_i}}$ be the occurrences of $z_i$ over all literals. We have slightly different vertex names from \cite{LLMT09}. For $j=1,\ldots,z^+_i$, delete the edge $p^-_{i,j}p^{--}_{i,j}$ and add the four edges $p^{-}_{i,j}\alpha^{2+}_{n_j}$, $p^{-}_{i,j}\beta^{2+}_{n_j}$, $p^{--}_{i,j}\alpha^{3+}_{n_j}$, $p^{--}_{i,j}\beta^{3+}_{n_j}$. 
Additionally to these edges, which were in \cite{LLMT09}, we also add: $p^{-}_{i,j}\alpha^{3+}_{n_j}$, $p^{-}_{i,j}\beta^{3+}_{n_j}$, $p^{--}_{i,j}\alpha^{2+}_{n_j}$, $p^{--}_{i,j}\beta^{2+}_{n_j}$.
\item For $i=1,\ldots,m$ and $j=1,2,3$, add the edges $\alpha^{2-}_{3(i-1)+j} c^{j+}_i$, $\alpha^{3-}_{3(i-1)+j} c^{j-}_i$, $\beta^{2-}_{3(i-1)+j} c^{j+}_i$, $\beta^{3-}_{3(i-1)+j} c^{j-}_i$. 
Additionally to these edges, which were in \cite{LLMT09}, we also add: $\alpha^{3-}_{3(i-1)+j} c^{j+}_i$, $\alpha^{2-}_{3(i-1)+j} c^{j-}_i$, $\beta^{3-}_{3(i-1)+j} c^{j+}_i$, $\beta^{2-}_{3(i-1)+j} c^{j-}_i$.
\item In \cite{LLMT09}, they add the edges $\alpha'_{3m}d^+_1$ and $\beta'_{3m}c^{0+}_1$. Instead we will add a path of length $\ell$.
\item Add the vertex $x$. In \cite{LLMT09}, they  add the edges $x\alpha_1$ and $x\beta_1$. Instead we will add paths of length $\ell$.
\item Add the vertex $y$. In \cite{LLMT09}, they add the edges $yc^{0-}_m$ and $yd^{-}_n$. Instead we will add paths of length $\ell$.
\end{enumerate}

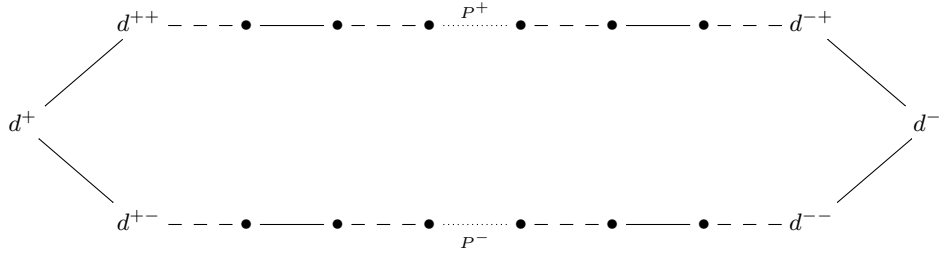
\begin{figure}[tb]
\[
\xymatrix{
& d^{++} \ar@{-}[dl] \ar@{--}[r] & \bullet \ar@{-}[r] & \bullet \ar@{--}[r] & \bullet \ar@{.}[r]^{P^+} & \bullet \ar@{--}[r] & \bullet  \ar@{-}[r] & \bullet \ar@{--}[r] & d^{-+} \ar@{-}[dr] & \\
d^{+} & & & & & & &  & & d^{-} \\
& d^{+-} \ar@{-}[ul] \ar@{--}[r] & \bullet \ar@{-}[r] & \bullet \ar@{--}[r] & \bullet \ar@{.}[r]_{P^-} & \bullet \ar@{--}[r] & \bullet  \ar@{-}[r] & \bullet \ar@{--}[r] & d^{--}  \ar@{-}[ur] & \\
}
\]
\caption{The variable gadget.
 Dashed lines indicate paths of length $\ell$. Dotted lines indicate a continuation of the gadget.}
\label{fig:2IDP-variable}
\end{figure}

\begin{claimm} \label{c-our-leveque-2.6}
$\phi$ is satisfied by a truth assignment if and only if $G^\ell_\phi$ contains a hole passing through $x$ and $y$.
\end{claimm}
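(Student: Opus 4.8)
The plan is to follow the proof of Lemma~2.6 in~\cite{LLMT09}, establishing the two implications separately and tracking how the modifications (items~1--9) affect the gadget-by-gadget routing arguments.

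\emph{Forward direction.} Suppose a truth assignment $\tau$ satisfies $\phi$. I would exhibit an explicit hole. In each variable gadget $G^\ell(z_i)$ I route along the top path $P^+_i$ if $\tau(z_i)$ is true and along $P^-_i$ otherwise; in each clause gadget $G^\ell(C_j)$ I route through the branch of some literal that $\tau$ satisfies; and in each literal gadget $G^\ell(y_j)$ I take the ``$+$''-crossing through $(\alpha^{2+},\alpha^{3+})$ and $(\beta^{2+},\beta^{3+})$ or the ``$-$''-crossing through $(\alpha^{2-},\alpha^{3-})$ and $(\beta^{2-},\beta^{3-})$, chosen consistently with the routing of the corresponding variable gadget. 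Splicing these local routes together with the connecting paths of length~$\ell$ from items~1--3 and~7--9 produces a single closed walk through $x$ and $y$, and the disjointness of the top/bottom and $+$/$-$ branches makes it a cycle. It then remains to verify that this cycle is induced. Since all connecting paths are subdivided, every edge of $G^\ell_\phi$ either has both endpoints inside one gadget or is one of the interface edges of items~4--6; hence any chord must be a local chord inside a gadget or such an interface edge. Ruling out local chords is a finite check in each of the three gadget types, and ruling out interface chords is exactly where the consistency between the variable and literal routings, enforced by items~4--5, is used.

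\emph{Backward direction.} Let $C$ be a hole through $x$ and $y$. As $x$ and $y$ have degree~$2$, $C$ uses both edges at each of them. The crux is to show that $C$ traverses every gadget canonically. I would first prove the bottleneck property of the (augmented) literal gadget: the two arcs of $C$ inside a literal gadget, entering at $\alpha,\beta$ and leaving at $\alpha',\beta'$, are forced to cross entirely on the ``$+$'' side or entirely on the ``$-$'' side. This is the analogue of the ``$q_1,q_2$ are bottlenecks'' observation in the proof of Lemma~\ref{c-icred}. Granting it, the route of $C$ through each variable gadget defines a truth assignment $\tau$; the interface edges of items~4--5 force each literal gadget to cross on the side dictated by $\tau$; and the forced three-way choice inside each clause gadget exhibits a literal of that clause made true by $\tau$. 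Thus $\tau$ satisfies $\phi$.

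\emph{Main obstacle.} The difficulty is precisely the bottleneck re-verification for the backward direction. The extra edges of items~4--6 were added purely to destroy induced copies of $C_6$ and of every $H_\ell$, and they increase the adjacencies inside and around the literal gadget, so the analysis of~\cite{LLMT09} cannot be quoted verbatim. One must carry out a finite case analysis over the possible induced subpaths through the modified literal gadget and its variable/clause interfaces and show that no new adjacency opens up a non-canonical routing, such as a diagonal crossing mixing the $+$ and $-$ sides or a shortcut bypassing a bottleneck, that could be completed to a hole through both $x$ and $y$. The length-$\ell$ subdivisions are what keep this analysis local: they confine every short induced cycle, and in particular any potential shortcut, to the interior of a single gadget.
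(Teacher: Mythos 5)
Your proposal takes essentially the same approach as the paper's own proof: the forward direction is an explicit hole construction routed according to the truth assignment (with the potential double-use of the crossing vertices $\alpha_k^{2\pm},\alpha_k^{3\pm}$ by the literal and variable/clause routes ruled out by exactly the variable--literal consistency you invoke), and the backward direction forces the canonical traversal gadget by gadget via the observation that a vertex adjacent to two hole vertices cannot itself lie on the hole, then reads off a satisfying assignment from which of $P_i^+$, $P_i^-$ the hole uses. The only difference is one of detail rather than of route: the paper actually carries out the inductive bottleneck analysis that you defer to ``a finite case analysis,'' and that analysis succeeds because the extra edges of items 4--6 make the two crossing vertices (e.g.\ $\alpha_k^{2+}$ and $\alpha_k^{3+}$) interchangeable endpoints of the same interface rather than opening any non-canonical routing.
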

\begin{proof}
The idea is that any hole emanating from $x$ and moving rightwards towards $y$ (see Fig.~\ref{fig:2IDP-literal}) must traverse the literal gadgets in precisely one of two ways (upper path on top and bottom; or bottom path on the top and bottom). Now, subsequently the paths building the hole may return to these literal gadgets but they can never leave them as each $\alpha,\beta,\alpha',\beta'$ are already traversed. 
Subsequently, the paths do indeed not return to the literal gadgets to ensure their consistent evaluation with the variables and that one in each clause is true.

More formally, first assume that $\phi$ is satisfied by a truth assignment $\xi \in \{0,1\}^n$. We pick a set of vertices that induce a hole containing $x$ and~$y$.
\begin{enumerate}
\item Pick vertices $x$ and $y$.
\item For $i=1,\ldots,3m$, pick  $\alpha_i,\alpha'_i,\beta_i,\beta'_i$.
\item For $i=1,\ldots,3m$, if $y_i$ is satisfied by $\xi$, then pick $\alpha_i^{1+}$, $\alpha_i^{1++}$, $\alpha_i^{2+}$, $\alpha_i^{3+}$,$\alpha_i^{4++},$ $\alpha_i^{4+}$ and any vertices on a direct path between these. Else, pick  $\alpha_i^{1-},\alpha_i^{1--},$ $\alpha_i^{2-},\alpha_i^{3-},\alpha_i^{4--},\alpha_i^{4-}$ and any vertices on a direct path between these. 
\item For $i=1,\ldots,n$, if $\xi(i)=1$, then pick all vertices of $P^+_i$ and all the neighbours of the vertices in $P^+_i$ of the form $\alpha_k^{2+}$ (or one could choose $\alpha_k^{3+}$, but only one among the two) for any $k$. Additionally pick any vertices on a direct path between these.
\item For $i=1,\ldots,n$, if $\xi(i)=0$, then pick all the vertices of $P^-_i$ and all the neighbours of the vertices in $P^-_i$ of the form $\alpha_k^{2+}$ 
(or one could choose $\alpha_k^{3+}$, but only one among the two) for any $k$. Additionally pick any vertices on a direct path between these.
\item For $i=1,\ldots,m$, pick the vertices $c_i^{0+}$ and $c_i^{0-}$. Choose any $j \in \{3i-2,3i-1,3i\}$ such that $\xi$ satisfies $y_j$. Pick vertices $\alpha^{2-}_j$ and $\alpha^{3-}_j$.
\begin{itemize}
\item If $j=3i-2$, then pick $c_i^{12+},c_i^{1+},c_i^{1-},c_i^{12-}$ as well as all vertices on a path between: $c^{0+}$ and $c_i^{12+}$; $c_i^{12+}$ and $c_i^{1+}$; $c^{0-}$ and $c_i^{12-}$; $c_i^{12-}$ and $c_i^{1-}$. 
\item If $j=3i-1$, then pick $c_i^{12+},c_i^{2+},c_i^{2-},c_i^{12-}$ as well as all vertices on a path between: $c^{0+}$ and $c_i^{12+}$; $c_i^{12+}$ and $c_i^{2+}$; $c^{0-}$ and $c_i^{12-}$; $c_i^{12-}$ and $c_i^{2-}$. 
\item If $j=3i$, then pick $c_i^{3+},c_i^{3-}$ as well as all vertices on a path between: $c^{0+}$ and $c_i^{3+}$; $c_i^{0-}$ and $c_i^{3-}$.
\end{itemize}
\end{enumerate}
It suffices to show that the chosen vertices induce a hole in $G_\phi^\ell$ containing $x$ and $y$. The only potential problem is that for some $k$, one of the vertices $\alpha_k^{2+},\alpha_k^{3+},\alpha_k^{2-},\alpha_k^{3-}$ was chosen more than once. If $\alpha_k^{2+}$ and $\alpha_k^{3+}$ were picked in Step 3, then $y_k$ is satisfied by~$\xi$. Therefore, $\alpha_k^{2+}$ and $\alpha_k^{3+}$ were not chosen in Step 4 or Step 5. Similarly, if $\alpha_k^{2-}$ and $\alpha_k^{3-}$ were picked in Step 6, then $y_k$ is satisfied by~$\xi$. Therefore, $\alpha_k^{2-}$ and $\alpha_k^{3-}$ 
were not chosen in Step 3. Thus, the chosen vertices 
induce a hole in $G_\phi^\ell$ containing $x$ and $y$. 

Now assume that $G_\phi^\ell$ has a hole including $x$ and $y$. The hole must contain $\alpha_1$ and $\beta_1$ since they are the only neighbours of $x$. Next, either both $\alpha_1^{1+}$ and $\beta_1^{1+}$ are in the hole or both $\alpha_1^{1-}$ and $\beta_1^{1-}$. W.l.o.g., let $\alpha_1^{1+}$ and $\beta_1^{1+}$ be in the hole (the same reasoning will apply in the other case). Since $\alpha_1^{1-}$, $\beta_1^{1-}$, $\alpha_1^{1--}$, $\beta_1^{1--}$ are all neighbours of two vertices in the hole, they cannot themselves be in the hole. Thus, $\alpha_1^{2^+}$, $\beta_1^{2+}$, and the paths that lead to them, must be in the hole. Since $\alpha_1^{2^+}$, $\beta_1^{2+}$ have the same neighbourhood outside of $G(y_1)$ it follows that $\alpha_1^{3^+}$, $\beta_1^{3+}$ must be in the hole. Indeed, so must also $\alpha_1^{4++}$, $\beta_1^{4++}$, $\alpha_1^{4^+}$, $\beta_1^{4+}$ and the path in between. 
Note that $\alpha_1^{4-}$, $\beta_1^{4-}$ are not in the hole, as they are adjacent to both $\alpha_1^{4++}$ and $\beta_1^{4++}$. So it must contain instead $\alpha'_1$, $\beta'_1$, $\alpha_2$, $\beta_2$.
By induction, we see for $i \in [3m]$ that the hole must contain $\alpha_i$, $\beta_i$, $\alpha'_i$, $\beta'_i$. Also, for each $i$, the hole must contain $\alpha_i^{1+},\alpha_i^{1++},\ldots,\alpha_i^{2+},\alpha_i^{3+},\ldots,\alpha_i^{4++},\alpha_i^{4+}$ or $\alpha_i^{1-},\alpha_i^{1--},\ldots,\alpha_i^{2-},\alpha_i^{3-},$ $\ldots,\alpha_i^{4--},\alpha_i^{4-}$.
Hence, the hole contains $d_1^+$ and~$c_1^{0+}$. 

By symmetry we may assume the hole contains $d_1^{++}$, and the path to $p_{1,1}^+$, and $\alpha_k^{2+}$ for some $k$. As $\alpha_k^{1++}$ is adjacent to two vertices in the hole, the hole must contain one of $\alpha_k^{2+}$ and $\alpha_k^{3+}$. Similarly, the hole cannot proceed on a path to $\alpha_k^{4++}$, so it must contain $p^+_{1,2}$ and $p^{++}_{1,2}$. By induction, 
we see 
that the hole contains $p^+_{1,i},p^{++}_{1,i}$, for $i \in [n]$, and $d^-_1$. If the hole contains $d^{--}_1$, then the hole must contain $p^-_{1,i},p^{--}_{1,i}$, for $i \in [n]$, and eventually $d^{+-}_1$, a contradiction. Thus, the hole must contain $d_2^+$. By induction, for $i \in [n]$, 
we see that 
the hole contains all the vertices of the path $P_i^+$ or $P_i^-$ and, by symmetry, we assume that the hole contains neighbours of the vertices in $P^+_i$ or $P^-_i$, one among $\alpha_k^{2+}$ and $\alpha_k^{3+}$, for each $k$.

Similarly, for $i \in [m]$, it follows that the hole must contain $c_i^{0+}$ and~$c_i^{0-}$. The hole also contains one of the following:
\begin{itemize}
\item $c_i^{12+}$, $c_i^{1+}$, $c_i^{1-}$, $c_i^{12-}$, and the paths between, and either one of $\alpha_j^{2-}$, $\alpha_j^{3-}$; or one of $\beta_j^{2-}$, $\beta_j^{3-}$.
\item $c_i^{12+}$, $c_i^{2+}$, $c_i^{2-}$, $c_i^{12-}$, and the paths between, and either one of $\alpha_j^{2-}$, $\alpha_j^{3-}$; or one of $\beta_j^{2-}$, $\beta_j^{3-}$.
\item $c_i^{3+}$, and the path between, and either one of $\alpha_j^{2-}$, $\alpha_j^{3-}$; or one of $\beta_j^{2-}$, $\beta_j^{3-}$.
\end{itemize}
We now recover the satisfying assignment $\xi$. For $i \in [n]$, 
set $\xi(i)=1$ if the vertices of $P_i^+$ are in the hole; 
otherwise set $\xi(i)=0$.
By construction,
 at least one literal in every clause is satisfied by $\xi$, so  indeed $\xi$ is a satisfying assignment.
\qed\end{proof}

\begin{claimm} \label{c-hc6free}
The graph $G^\ell_\phi$ is $C_6$-free and $H_i$-free for every $i \in [\ell]$.
\end{claimm}

\begin{proof}
Owing to the length of the $\ell$ paths 
that populate our construction and are drawn as dashed edges in our figures, we need only verify the omission of the relevant graphs on the connected components of the graph $G_\phi$ after the removal of these $\ell$ paths that are dashed edges. That would suffice for $C_6$, but $H_i$ has a pendant edge, so for these we must leave a pendant edge from the corresponding connected component at the extremities of an instance of these $\ell$ paths that are drawn as dashed edges. 
In this fashion, we only need to check for omission of the given graphs in the non-trivial cases drawn in Fig.~\ref{fig:2IDP-omission-cases}. 
It can be readily observed that these graphs are $P_7$ free (but they are not $P_6$-free). Hence, we need not test beyond $H_3$.
This task was accomplished by a program testing subgraph isomorphism whose code we provide a link to.\footnote{See \texttt{https://github.com/barnabymartin/InducedSubgraph}.}

We will give an explicit argument for the case of $C_6$-freeness, which is simpler as $C_6$ has numerous symmetries (a transitive automorphism group). 
Let us begin with the graph depicted on the left-hand side of Fig.~\ref{fig:2IDP-omission-cases}. This graph has an automorphism that swaps $\alpha$ and $\beta$ at the same time as $+$ and $-$. It also has an automorphism that only swaps $+$ and $-$. 
Any subgraph that induces a $C_6$ cannot contain any of the unlabelled vertices, nor $\alpha$ nor $\beta$. This leaves eight vertices that may be involved. We will consider the case where the $C_6$ contains $\alpha^{1+}$. Owing to the two automorphisms we have described, this argument would equally apply to $\alpha^{1-}$ and $\beta^{1-}$. But any $C_6$ must involve one of these vertices as there were only eight to choose from. Thus, when we have considered this case, our work is done:

\medskip
\noindent
{\bf Subcase A.} The $C_6$ contains $\alpha^{1+}$ and $\alpha^{1++}$. All other neighbours of $\alpha^{1+}$ (except $\alpha$) are adjacent to $\alpha^{1++}$. No $C_6$ can be formed here.

\noindent
{\bf Subcase B.} The $C_6$ contains $\alpha^{1+}$ and $\alpha^{1--}$. Any $C_6$ involving a path $\alpha^{1--}$ to $\alpha^{1+}$ must next go to $\beta^{1-}$. 
We cannot continue this cycle.

\noindent
{\bf  Subcase C.} The $C_6$ contains $\alpha^{1+}$ and $\beta^{1--}$. Any $C_6$ involving a path $\beta^{1--}$ to $\alpha^{1+}$ must next go to $\alpha^{1-}$ or $\alpha^{1--}$. We cannot continue this cycle.

\noindent
{\bf Subcase D.} The $C_6$ contains $\alpha^{1+}$ and $\beta^{1-}$. Now, $\beta^{1-}$ can have as the next in the cycle either of $\beta^{1+}$ or $\beta^{1++}$.
We cannot continue this cycle.

\noindent
{\bf Subcase E.} The $C_6$ contains $\alpha^{1+}$ and $\alpha^{1-}$. Now, $\alpha^{1-}$ can have as the next in the cycle either of $\beta^{1+}$ or $\beta^{1++}$.
We cannot continue this cycle.

Now we consider the graph depicted on the right-hand side of Fig.~\ref{fig:2IDP-omission-cases}. Any $C_6$ cannot contain any of the unlabelled vertices. It follows that it must use all six remaining vertices. But this induced graph has a triangle, so we are finished.
\qed\end{proof}
We note that the construction in \cite{LLMT09} omits all cycles other than $C_6$, and they note specifically this lacuna, which we have remedied.

We now prove our result.

\begin{figure}[tbp]
$
\xymatrix{
& & \alpha^{1+} \ar@/_1.5pc/@{-}[ddddd] \ar@/^/@{-}[dddddr] \ar@{-}[dd] \ar@{-}[ddr] \ar@{-}[r] & \alpha^{1++} \ar@{-}[dd] \ar@{-}[ddl]  \ar@/_/@{-}[dddddl]  \ar@/^1.5pc/@{-}[ddddd]  \ar@{-}[r] & \bullet \\
\bullet \ar@{-}[r]  & \alpha \ar@{-}[ur] \ar@{-}[dr] & & & \\
& & \alpha^{1-} \ar@{-}[d] \ar@{-}[dr] \ar@{-}[r] & \alpha^{1--} \ar@{-}[d] \ar@{-}[d]  \ar@{-}[dl] \ar@{-}[r] &  \bullet \\
& & \beta^{1+} \ar@{-}[dd] \ar@{-}[ddr] \ar@{-}[r] & \beta^{1++} \ar@{-}[dd] \ar@{-}[ddl] \ar@{-}[r] & \bullet \\
\bullet \ar@{-}[r] & \beta \ar@{-}[ur] \ar@{-}[dr] & & &  \\
& & \beta^{1-} \ar@{-}[r] & \beta^{1--}  \ar@{-}[r] &  \bullet \\
}
$
\ \ \
$
\xymatrix{
\bullet  \ar@{-}[r] &  \alpha^{2-} \ar@{-}[r] & \alpha^{3-} \ar@{-}[r] & \bullet  \\
\bullet \ar@{-}[r] & \beta^{2-} \ar@{-}[r] & \beta^{3-} \ar@{-}[r]  & \bullet  \\
& c^{1+} \ar@{-}[u] \ar@{-}[uur] \ar@{-}[ur] \ar@/^1pc/@{-}[uu] & c^{1-} \ar@{-}[uul] \ar@{-}[u] \ar@{-}[ul] \ar@/_1pc/@{-}[uu] &  \\
\bullet \ar@{-}[ur] & & & \bullet \ar@{-}[ul] \\
}
$
\caption{Cases that need to be checked for omission of the graphs $C_6$ and $H_i$ $(1\leq i\leq \ell)$.}
\label{fig:2IDP-omission-cases}
\end{figure}
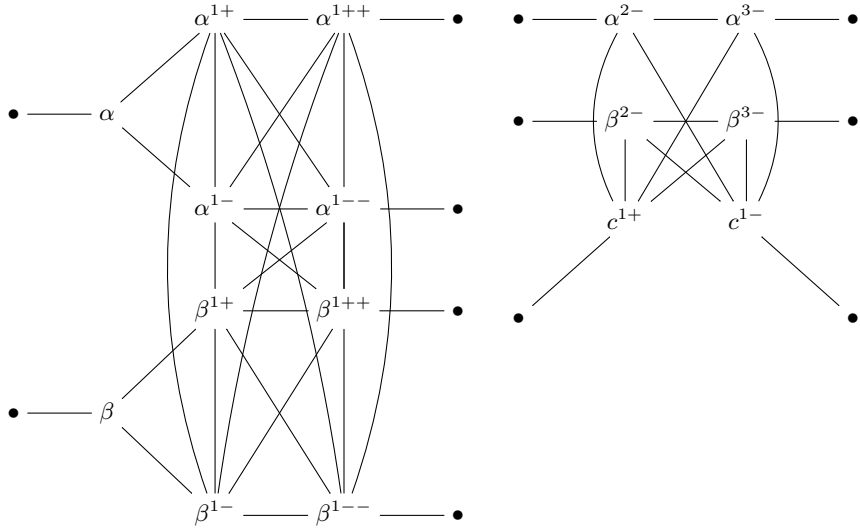

\begin{lemma}\label{l-h}
For every integer $\ell\geq 1$, {\sc $2$-Induced Disjoint Paths} is \NP-complete for $(C_6,H_\ell)$-free graphs.
\end{lemma}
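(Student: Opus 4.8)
The plan is to combine the two claims just established with the reduction of Lemma~\ref{c-icred}. Membership in \NP{} is immediate, since a candidate pair of paths can be checked in polynomial time for being mutually induced, so the whole effort goes into hardness. I would reduce from {\sc $3$-Satisfiability}. Given a formula $\phi$, I build the graph $G_\phi^\ell$ together with the degree-$2$ vertices $x$ and $y$ exactly as in Section~3.1. Claim~\ref{c-our-leveque-2.6} tells us that $\phi$ is satisfiable if and only if $G_\phi^\ell$ has a hole through $x$ and $y$, that is, if and only if $(G_\phi^\ell,x,y)$ is a yes-instance of $2$-{\sc Induced Cycle}, while Claim~\ref{c-hc6free} guarantees that $G_\phi^\ell$ is $C_6$-free and $H_i$-free for every $i\in[\ell]$.

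Next I would feed $(G_\phi^\ell,x,y)$ into Lemma~\ref{c-icred}. Since $x$ and $y$ have degree~$2$, the lemma applies and produces, in polynomial time, a graph $G'$ with terminal pairs $(s_1,t_1)$ and $(s_2,t_2)$ such that $G_\phi^\ell$ has a hole through $x$ and $y$ if and only if $G'$ admits mutually induced $s_1$--$t_1$ and $s_2$--$t_2$ paths. Chaining this equivalence with Claim~\ref{c-our-leveque-2.6} already gives a correct polynomial reduction from {\sc $3$-Satisfiability} to $2$-{\sc Induced Disjoint Paths}, so all that is left is to certify that $G'$ can be made $(C_6,H_\ell)$-free, which is where I expect the real work to be.

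To do this I would exploit the freedom in Lemma~\ref{c-icred} to subdivide every edge incident to a gadget vertex arbitrarily many times. The only new vertices are those of the two gadgets replacing $x$ and $y$; each has degree at most~$3$, the only degree-$3$ vertices among them are the bottlenecks $q_1,q_2,b_1,b_2$, and each gadget consists of a single cycle (of length~$8$ before subdivision) together with two pendant attachments into $G_\phi^\ell$. I would subdivide every such edge enough that (i)~every cycle meeting a gadget has length far larger than~$6$, and (ii)~each gadget bottleneck sits at distance strictly more than $\ell+2$ from every other vertex of degree at least~$3$ in $G'$, including the branch vertices inherited from $G_\phi^\ell$, which are reachable only through the long subdivided attachment edges. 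Under~(i) any induced $C_6$ must avoid the gadgets entirely and hence lie in $G_\phi^\ell$, contradicting Claim~\ref{c-hc6free}. Under~(ii), since an induced $H_\ell$ has two degree-$3$ branch vertices at distance exactly~$\ell$, no branch vertex can be a gadget bottleneck; both branch vertices therefore lie in $G_\phi^\ell$, and neither the length-$\ell$ connecting path nor the length-$1$ arms can detour through a gadget without traversing a long subdivided attachment, so the whole $H_\ell$ lies in $G_\phi^\ell$, again contradicting Claim~\ref{c-hc6free}.

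The delicate point, and the one I would be most careful to pin down, is the bookkeeping in~(ii): I must rule out that, after subdivision, a gadget bottleneck acts as the degree-$3$ vertex of an $H_\ell$ with two short arms and a third arm reaching a $G_\phi^\ell$ branch vertex at distance exactly~$\ell$, and symmetrically that a $G_\phi^\ell$ branch vertex cannot reach a bottleneck at that distance. Choosing the common subdivision length to be a fixed function of $\ell$ exceeding $\ell+2$ dispatches all of these configurations uniformly, after which \NP-completeness for $(C_6,H_\ell)$-free graphs follows from the \NP-completeness of {\sc $3$-Satisfiability}.
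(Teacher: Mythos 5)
Your proposal is correct and follows essentially the same route as the paper: reduce from {\sc $3$-Satisfiability} via $G_\phi^\ell$, invoke Claims~\ref{c-our-leveque-2.6} and~\ref{c-hc6free}, and then apply Lemma~\ref{c-icred}, using the freedom to subdivide the gadgets' incident edges to preserve $(C_6,H_\ell)$-freeness. The only difference is that you spell out the distance bookkeeping (long cycles through the gadgets, bottlenecks far from any other branch vertex) that the paper compresses into a single sentence, which is a sound elaboration rather than a different argument.
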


\begin{proof}
We give a reduction from an instance $\phi$ of {\sc $3$-Satisfiability}. First, we construct $G^\ell_\phi$. By Claim~\ref{c-our-leveque-2.6}, $G^\ell_\phi$ has a hole through $x$ and $y$ if and only if $\phi$ is satisfiable. Moreover, $G^\ell_\phi$ is $(C_6,H_\ell)$-free by Claim~\ref{c-hc6free}. We now apply the reduction of Lemma~\ref{c-icred}. As we can subdivide any number of times the edges incident on the newly created vertices, the resulting graph is still $(C_6,H_\ell)$-free.
\qed
\end{proof}

\section{The Proof of Theorem~\ref{thm:k-IDP}}\label{s-mainmain}

We now use the results from the previous two sections to prove our main theorem. 

\medskip
\noindent
{\bf Theorem~\ref{thm:k-IDP} (restated).}
{\it Let $k\geq 2$. For a graph $H$, {\sc $k$-Induced Disjoint Paths} is polynomial-time solvable if $H$ is a subgraph of the disjoint union of a linear forest and a chair, and it is \NP-complete if $H$ is not in ${\cal S}$.}

\begin{proof}
If $H$ has a cycle $C_s$, apply Lemma~\ref{l-girth} for $s\neq 6$ or Lemma~\ref{l-h} for $s=6$. Then we may assume $H$ is a forest. If $H$ has a vertex of degree at least~$4$, then every $K_{1,4}$-free graph is $H$-free, so apply Lemma~\ref{l-k14}. Suppose $H$ has maximum degree at most~$3$. If $H$ has a connected component with at least two vertices of degree~$3$, then $H$ has an induced $H_\ell$, so apply Lemma~\ref{l-h} again. Else, $H$ is in ${\cal S}$. 
If $H$ is a subgraph of the disjoint union of a linear forest and a chair,  apply Lemma~\ref{l-poly}. \qed
\end{proof}

\section{Parameterized Complexity}\label{s-pc}

We prove that {\sc $2$-Induced Disjoint Paths} is $\W[1]$-hard for $P_r$-free graphs when parameterized by $r$. In order to do this we adapt a reduction of Haas and Hoffmann~\cite{HH06}. They prove that finding an induced path through three specified vertices is $\W[1]$-hard when parameterized by the length of the path. The graph of their construction can potentially contain arbitrarily long induced paths, but we propose a modification which guarantees that the length of any induced path in the construction is bounded. Below, we discuss the original construction and our modification; the proof of correctness is almost exactly the same and only sketched here.

\begin{theorem}\label{t-w1}
{\sc $2$-Induced Disjoint Paths} is $\W[1]$-hard on $P_r$-free graphs, parameterized by $r$.
\end{theorem}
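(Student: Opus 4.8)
The plan is to reduce from a known $\W[1]$-hard problem via the construction of Haas and Hoffmann~\cite{HH06}, who show that deciding whether a graph contains an induced path through three prescribed vertices $a$, $b$, $c$ is $\W[1]$-hard when parameterized by the target path length. First I would recall their reduction (from a $\W[1]$-hard problem such as \textsc{Multicolored Clique} or \textsc{Independent Set}, where the parameter is the solution size), in which an induced $a$-$b$-$c$ path encodes a solution and its length is a function of the parameter. The first conceptual step is to convert ``induced path through three vertices'' into an instance of \textsc{$2$-Induced Disjoint Paths}: an induced path visiting $a$, then $b$, then $c$ can be split at the middle terminal $b$ into two subpaths, one from $a$ to $b$ and one from $b$ to $c$. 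By duplicating $b$ into two adjacent-controlled copies (or more carefully, by attaching appropriate gadget vertices so that a solution to the two-path instance reassembles into a single induced path through $a,b,c$), one obtains an instance $(G', (s_1,t_1),(s_2,t_2))$ of \textsc{$2$-Induced Disjoint Paths} whose yes-instances correspond exactly to the yes-instances of the three-vertex induced-path problem. I would set $s_1=a$, $t_1=t_2=$ the split copies at $b$, and $s_2=c$, ensuring that mutual inducedness of the two paths together with the forced passage through $b$ exactly mimics a single induced path.

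The crucial new ingredient, and the reason this is a theorem rather than a routine corollary, is to guarantee that the resulting graph is $P_r$-free for $r$ bounded by a function of the parameter, so that $r$ can serve as the parameter witnessing $\W[1]$-hardness. The obstacle is that the Haas--Hoffmann graph can contain arbitrarily long induced paths, which would make $r$ unbounded in the input size rather than bounded by the parameter. The plan is to modify their construction exactly as flagged in the excerpt: I would identify the long induced paths in the original graph and insert dominating vertices or short-circuiting edges that destroy long induced paths without affecting the correctness of the reduction (that is, without creating or destroying any relevant induced $a$-$b$-$c$ path). Concretely, the gadget components each have bounded ``diameter'' in the induced-path sense, and the only way to build a long induced path is to thread through many component-connecting edges; by adding controlled adjacencies between consecutive gadget layers one can cap the length of any induced path at $O(k)$, where $k$ is the parameter, so that the constructed graph is $P_r$-free for some $r=O(k)$.

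The main step to verify is twofold: first, that the added edges do not introduce spurious short induced paths that would produce false positive solutions to the \textsc{$2$-Induced Disjoint Paths} instance, and second, that they genuinely bound the induced-path length. For correctness I would argue that any mutually induced pair of paths in $G'$ still encodes a valid solution to the source instance, using the same case analysis as Haas--Hoffmann (which, as the excerpt notes, transfers almost verbatim, since the new edges lie outside the vertices that any genuine solution path can use, or are blocked by the mutual-inducedness constraint). For the length bound I would show that any induced path in $G'$ can intersect each gadget in only a bounded number of vertices, and can pass between only a bounded number of gadgets before a dominating vertex forces a chord; multiplying these bounds gives a uniform constant times $k$, hence $P_r$-freeness for $r$ a function of $k$ alone. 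Since the reduction is polynomial and the new parameter $r$ is bounded by a function of the original parameter, this establishes $\W[1]$-hardness of \textsc{$2$-Induced Disjoint Paths} on $P_r$-free graphs parameterized by $r$. I expect the verification that the length-capping edges preserve correctness to be the most delicate part, as it requires checking that no shortcut edge enables an induced path that bypasses the intended gadget structure.
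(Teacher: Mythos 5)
Your high-level plan matches the paper's: the paper also adapts the Haas--Hoffmann construction, effectively splitting the induced $a$-$b$-$c$ path at the middle vertex into two terminal pairs (in the paper, $(s^1,t^k)$ and $(\sigma^1,\tau^k)$, with a once-subdivided edge between $t^k$ and $\tau^k$ playing the role of $b$). However, there is a genuine gap at exactly the point you yourself identify as the crucial new ingredient: you never specify the modification that bounds the induced path length, and the mechanisms you gesture at do not work. Adding \emph{dominating vertices} is useless here: a $P_r$-free property is about induced subgraphs, so a long induced path remains a long induced path after a dominating vertex is added next to it --- domination destroys nothing. Your alternative, ``short-circuiting edges between consecutive gadget layers,'' is unspecified, and it is precisely the wrong place to add edges: the long induced paths in the Haas--Hoffmann graph do not thread across layers but zigzag \emph{inside} the choice gadgets, alternating between the two copies via consistency edges (e.g.\ $v^i_1,\varphi^i_2,v^i_3,\varphi^i_4,\dots$), since the choice sets $v^i_1,\dots,v^i_n$ are independent sets of size $n$, unbounded in the parameter.

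The paper's fix is a single, surgical change: make each choice set $v^i_1,\dots,v^i_n$ a \emph{clique} instead of an independent set. Then the base graph is a union of $2k+2$ cliques (one of size $1$), any induced path contains at most two vertices of any clique, and --- since subsequently only edges are added, which keeps cliques cliques --- the final graph is $P_r$-free for $r\leq 4k+4$. No delicate ``no spurious shortcuts'' analysis of the kind you anticipate is needed, because the length bound is structural (a clique cover of size $2k+2$) rather than obtained by blocking specific long paths. Note also that your correctness sketch, ``the new edges lie outside the vertices that any genuine solution path can use,'' is false for the actual fix: the cliquified edges sit exactly on the choice vertices that every solution path uses; correctness survives because a solution path takes only one choice vertex per diamond, and the Haas--Hoffmann case analysis (consistency, independence, and set edges forcing $\gamma_i=\delta_i$, independence, and distinctness) goes through essentially verbatim. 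Finally, a small point: setting $t_1=t_2$ as ``split copies at $b$'' conflicts with the requirement that terminal pairs be pairwise disjoint; the paper avoids this by making the subdivision vertex a non-terminal separating the two endpoints $t^k$ and $\tau^k$.
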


\begin{proof}
The reduction is from {\sc Independent Set}, which is known to be $\W[1]$-hard when parameterized by the size of the solution~\cite{DF95}. Let $G=(V,E)$ be an instance of this problem, where $V=\{v_1,\ldots,v_n\}$, and let $k$ be the parameter. The main ingredient of the construction by Haas and Hoffmann~\cite{HH06} is a set of $k$ vertex choice diamonds. The $i$-th diamond consists of $n$ vertices $v_1^i,\ldots,v_n^i$, and two vertices $s^i$ and $t^i$ which are adjacent to all $v_1^i,\ldots,v_n^i$. Our modification is to make $v_1^i,\ldots,v_n^i$ a clique, instead of an independent set as in the original construction. Identify $s^{i-1}$ and $t^i$ for each $2 \leq i \leq k$ and call the resulting graph $G_{VC}$.

Now create two copies of $G_{VC}$. Denote the vertices in the first copy by $s^i$, $v^i_j$, and $t^i$ and in the second copy respectively by $\sigma^i$, $\varphi^i_j$, and $\tau^i$. Add an edge between $t^k$ and $\tau^k$ and subdivide it once (the latter is a minor modification with respect to the original construction). Call the resulting graph $G''$.

Note that $G''$ is a union of $2k+2$ cliques, one of which has size~$1$. In the remainder, we will not add more vertices, only edges. Since any induced path can contain at most two vertices of any clique, the graph is and will remain $P_r$-free for $r \leq 4k+4$.

From $G''$, construct the graph $G'$ by adding the following edges (again, following Haas and Hoffmann~\cite{HH06}):
\begin{itemize}
\item add a \emph{consistency edge} between $v^i_j$ and $\varphi^i_\ell$, for all $1 \leq i \leq k$ and all $1 \leq j,\ell,n$ with $j \not= \ell$ (thus $k n (n-1)$ consistency edges are added in total);
\item add \emph{independence edges} between $\{v^i_p,\varphi^i_p\}$ and $\{v^j_q,\varphi^j_q\}$ for each edge $\{v_p,v_q\} \in E$ and for all $1 \leq i,j \leq k$ with $i\not=j$ (thus $4k(k-1) \cdot |E|$ independence edges are added in total);
\item add \emph{set edges} between $\{v^i_\ell,\varphi^i_\ell\}$ and $\{v^j_\ell,\varphi^j_\ell\}$ for all $1 \leq i,j \leq k$ with $i \not= j$ and all $1 \leq \ell \leq n$ (thus $4k(k-1)n$ set edges are added in total).
\end{itemize}
This completes the construction. Let the vertex pairs for the instance be $(s^1,t^k)$ and $(\sigma^1,\tau^k)$.

To show correctness, we essentially repeat the arguments of Haas and Hoffmann~\cite[Lemma~7, Theorem~8]{HH06}. We sketch the argument below. Let $P^1, P^2$ be mutually induced disjoint $(s^1,t^k)$- and $(\sigma^1,\tau^k)$-paths respectively. By shortcutting if necessary, we may assume that $P^1$ and $P^2$ are chordless. By construction, $P^1$ must contain one of $\{v_1^1,\ldots,v_n^1\}$, say $v_j^1$, and $P^2$ must contain one of $\{\varphi_1^1,\ldots,\varphi_n^1\}$, say $\varphi_\ell^1$. By the consistency edges, $j=\ell$. Also note that since $v_1^1,\ldots,v_n^1$ and $\varphi_1^1,\ldots,\varphi_n^1$ both induce a clique, $P^1$ cannot follow a consistency edge from $v_j^1$ and $P^2$ cannot follow a consistency edge from $\varphi_j^1$. Also note that the independence and set edges incident on $v_j^1$ and $\varphi_j^1$ lead to exactly the same vertices, and thus these vertices cannot be part of $P^1$ nor $P^2$. Hence, $P^1$ must continue to $t^1=s^2$ and $P^2$ must continue to $\tau^1=\sigma^2$. By repeating the same argument, we can show that $P^1$ and $P^2$ only use edges of the diamonds, and none of the consistency, independence, and set edges. In particular, $P^1$ and $P^2$ use vertices $s^1,v^1_{\gamma_1},t^1,\ldots,v^k_{\gamma_k},t^k$ and $\sigma^1,\varphi^1_{\delta_1},\tau^1,\ldots,\varphi^k_{\delta_k},\tau^k$ respectively. The consistency, independence, and set edges ensure respectively that $\gamma_i = \delta_i$ for $1 \leq i \leq k$, that $v_{\gamma_1},\ldots,v_{\gamma_k}$ form an independent set, and that $v_{\gamma_1},\ldots,v_{\gamma_k}$ is a set of size $k$.

For the converse, it is easy to see that any independent set $I=\{v_{\gamma_1},\ldots,v_{\gamma_k}\}$ can be transformed in a solution to the instance. The vertices $s^1,v_{\gamma_1}^1,t^1,\ldots,v_{\gamma_k}^k,t^k$ form an $(s^1,t^k)$-path that is mutually disjoint from the $(\sigma^1,\tau^k)$-path formed by  $\sigma^1,\varphi_{\gamma_1}^1,\tau^{1},\ldots,\varphi_{\gamma_k}^k,\tau^k$. \qed
\end{proof}

\section{Conclusions}\label{s-con}

We showed new tractable and hard results for {\sc $k$-Induced Disjoint Paths} for $H$-free graphs and extended a number of known results in this way.
The open cases all involve graphs $H$ that are not the disjoint union of some linear forest and the chair but that do belong to the family ${\cal S}$; we recall that ${\cal S}$ consists of all graphs, every 
connected component of which is a path $P_r$ or a subdivided claw~$S_{h,i,j}$. 

Due to the above, {\sc $k$-Induced Disjoint Paths} belongs to a set of several other problems whose complexity is open for $S_{h,i,j}$-free graphs for many $h,i,j$. The best-known problem of this set of problems is {\sc Independent Set}, which is to decide if a graph has an independent set of size at least $p$ for some integer~$p$.
Alekseev~\cite{Al82} proved that if a graph~$H$ is not in ${\cal S}$, then {\sc Independent Set} is \NP-complete for $H$-free graphs. If $H\in {\cal S}$, only a restricted number of cases are known to be polynomial-time solvable for {\sc Independent Set}. Another example is $3$-{\sc Colouring} for $H$-free graphs of bounded diameter (see~\cite{MPS19}). We do not know any graph $S_{h,i,j}$ and integer~$d$ such that $3$-{\sc Colouring} for $S_{h,i,j}$-free graphs of diameter~$d$ is \NP-complete (and only a small number of polynomial cases exist). Hence, in order to make further progress on  {\sc $k$-Induced Disjoint Paths} and these other problems we must better understand the structure of $S_{h,i,j}$-free graphs. 

It would also be interesting to consider the parameterized complexity of the problem in more detail. First recall that Golovach et al.~\cite{GPV15} showed that {\sc Induced Disjoint Paths} is  \FPT\ for claw-free graphs when parameterized by $k$. The class of claw-free graphs is properly contained in the class of chair-free graphs. From Theorem~\ref{thm:k-IDP} we know that {\sc Induced Disjoint Paths} is \XP\ for chair-free graphs when parameterized by $k$. Is the problem even \FPT\ for chair-free graphs when parameterized by $k$?

Moreover, in Theorem~\ref{t-w1} we proved that even {\sc $2$-Induced Disjoint Paths} is $\W[1]$-hard on $P_r$-free graphs when parameterized by $r$. But what is the parameterized complexity of {\sc Induced Disjoint Paths} for $P_r$-free graphs if $r$ is a constant but $k$ is the parameter? So far, we only know that the problem is in \XP\ by Theorem~\ref{thm:k-IDP} and that we may assume that $r\geq 7$ due to Theorem~\ref{thm:IDPnew}. 
Again we may draw a parallel to the situation for {\sc Independent Set} on $P_r$-free graphs. This problem can be solved in polynomial time for $r < 7$~\cite{GKPP22} and is trivially in \XP\ when parameterized by the size of the solution. However, it is open whether if it is FPT for constant $r \geq 7$ when parameterized by the size of the solution 
(see also~\cite{BBCTW20}). 
Progress on this problem would help to advance our understanding of {\sc $k$-Induced Disjoint Paths} on $P_r$-free graphs 
(see~\cite{MPSV} for a close relationship between both problems when the input is restricted to $P_r$-free graphs for some integer~$r\geq 1$).

Finally, in some previous works, a slightly more general definition is used (see also Section~\ref{s-intro}). 
Given a graph $G$, \emph{vertex-disjoint} paths $P^1,\ldots, P^k$, for some integer $k\geq 1$ are {\it flexibly mutually induced paths} of $G$ if there is no edge between two vertices from different $P^i$ and $P^j$ except possibly between the endpoints of the paths. If $k$ is in the input,  the complexity of the corresponding decision problem and ours is most likely different for $P_r$-free graphs. Namely, {\sc Flexibly Induced Disjoint Paths} is \NP-complete for  $P_{14}$-free graphs~\cite{MPSV}, whilst {\sc Induced Disjoint Path} is quasipolynomial-time solvable for $P_{14}$-free graphs by Theorem~\ref{thm:IDPnew}. However, it is readily seen that all polynomial-time results in Theorem~\ref{thm:k-IDP}  (so, for fixed $k\geq 2$) also hold for {\sc Flexible $k$-Induced Disjoint Paths}. This is even in the case if we also allow that two different paths $P^i$ and $P^j$ share a terminal (this even more general variant has been considered in the literature as well).

\bibliographystyle{abbrv}

\begin{thebibliography}{10}
\bibitem{Al82}
V.~E. Alekseev.
\newblock The effect of local constraints on the complexity of determination of
  the graph independence number.
\newblock {\em Combinatorial-Algebraic Methods in Applied Mathematics}, pages
  3--13, 1982 (in Russian).

\bibitem{Al04}
V.~E. Alekseev.
\newblock Polynomial algorithm for finding the largest independent sets in
  graphs without forks.
\newblock {\em Discrete Applied Mathematics}, 135:3--16, 2004.

\bibitem{BGHHKP14}
R.~Belmonte, P.~A. Golovach, P.~Heggernes, P.~van't Hof, M.~Kaminski, and
  D.~Paulusma.
\newblock Detecting fixed patterns in chordal graphs in polynomial time.
\newblock {\em Algorithmica}, 69(3):501--521, 2014.

\bibitem{Bi91}
D.~Bienstock.
\newblock On the complexity of testing for odd holes and induced odd paths.
\newblock {\em Discrete Mathematics}, 90:85--92, 1991.

\bibitem{BBCTW20}
E. Bonnet, N. Bousquet, P. Charbit, S. Thomass\'e, and R. Watrigant.
\newblock Parameterized complexity of Independent Set in $H$-free graphs. 
\newblock {\em Algorithmica}, 82: 2360--2394, 2020.

\bibitem{DF95}
R.G. Downey, M.R. Fellows.
\newblock Fixed-parameter tractability and completeness II: On completeness for W[1].
\newblock {\em Theoretical Computer Science}, 141(1--2):109--131, 1995.

\bibitem{Fe89}
M.~R. Fellows.
\newblock The {R}obertson-{S}eymour theorems: {A} survey of applications.
\newblock {\em
 Proc. AMS-IMS-SIAM Joint Summer Research Conference,
  Contemporary Mathematics}, 89:1--18, 1989.

\bibitem{FKLP12}
J.~Fiala, M.~Kami\'nski, B.~Lidick{\'{y}}, and D.~Paulusma.
\newblock The $k$-in-a-{P}ath problem for claw-free graphs.
\newblock {\em Algorithmica}, 62:499--519, 2012.

\bibitem{GKPP22}
A.~Grzesik, T.~Klimosov{\'{a}}, M.~Pilipczuk, M.~Pilipczuk.
\newblock Polynomial-time Algorithm for Maximum Weight Independent Set on $P_6$-free Graphs.
\newblock {\em ACM Transactions on Algorithms}, 18(1):4:1--4:57, 2022.

\bibitem{GPV15}
P.~A. Golovach, D.~Paulusma, and E.~J. van Leeuwen.
\newblock Induced disjoint paths in claw-free graphs.
\newblock {\em SIAM Journal on Discrete Mathematics}, 29:348--375, 2015.

\bibitem{GPV16}
P.~A. Golovach, D.~Paulusma, and E.~J. van Leeuwen.
\newblock Induced disjoint paths in circular-arc graphs in linear time.
\newblock {\em Theoretical Computer Science}, 640:70--83, 2016.

\bibitem{GPV22}
P.~A. Golovach, D.~Paulusma, and E.~J. van Leeuwen.
\newblock Induced disjoint paths in {A}{T}-free graphs.
\newblock {\em Journal of Computer and System Sciences}, 124:170--191, 2022.

\bibitem{HH06}
R. Haas, M. Hoffmann.
\newblock Chordless paths through three vertices. 
\newblock {\em Theoretical Computer Science}, 351: 360-371, 2006.

\bibitem{JKT20}
L.~Jaffke, O.~Kwon, and J.~A. Telle.
\newblock Mim-width I. induced path problems.
\newblock {\em Discrete Applied Mathematics}, 278:153--168, 2020.

\bibitem{KK12}
K.~Kawarabayashi and Y.~Kobayashi.
\newblock A linear time algorithm for the induced disjoint paths problem in
  planar graphs.
\newblock {\em  Journal of Computer and System Sciences}, 78:670--680, 2012.

\bibitem{KMPSV22}
W.~Kern, B.~Martin, D.~Paulusma, S.~Smith, and E.~J. van Leeuwen.
\newblock Disjoint paths and connected subgraphs for {$H$}-free graphs.
\newblock {\em Theoretical Computer Science}, 898:59--68, 2022.

\bibitem{KK09}
Y.~Kobayashi and K.~Kawarabayashi.
\newblock Algorithms for finding an induced cycle in planar graphs and bounded
  genus graphs.
\newblock {\em Proc. SODA 2009}, 1146--1155, 2009.

\bibitem{LLMT09}
B.~L{\'{e}}v{\^{e}}que, D.~Y. Lin, F.~Maffray, and N.~Trotignon.
\newblock Detecting induced subgraphs.
\newblock {\em Discrete Applied Mathematics}, 157:3540--3551, 2009.

\bibitem{Ly75}
J.~Lynch.
\newblock The equivalence of theorem proving and the interconnection problem.
\newblock {\em SIGDA Newsletter}, 5:31--36, 1975.

\bibitem{MPS19}
B.~Martin, D.~Paulusma, and S.~Smith.
\newblock Colouring ${H}$-free graphs of bounded diameter.
\newblock {\em Proc. MFCS 2019, LIPIcs}, 138:14:1--14:14, 2019.

\bibitem{MPSV2}
B.~Martin, D.~Paulusma, S.~Smith, and E.~J. van Leeuwen.
\newblock Few induced disjoint paths for $H$-free graphs.
\newblock {\em Proc. ISCO 2022, LNCS}, to appear.

\bibitem{MPSV}
B.~Martin, D.~Paulusma, S.~Smith, and E.~J. van Leeuwen.
\newblock Induced disjoint paths and connected subgraphs for ${H}$-free graphs.
\newblock {\em Proc. WG 2022, LNCS}, to appear.

\bibitem{RTV21}
M.~Radovanovi\'c, N.~Trotignon, and K.~Vu\u{s}kovi\'c.
\newblock The (theta,wheel)-free graphs {P}art {{I}{V}:} {I}nduced paths and
  cycles.
\newblock {\em Journal of Combinatorial Theory, Series B}, 146:495--531,
  2021.

\bibitem{RS95}
N.~Robertson and P.~D. Seymour.
\newblock Graph minors .{X}{I}{I}{I}. {T}he {D}isjoint {P}aths problem.
\newblock {\em Journal of Combinatorial Theory, Series B}, 63:65--110, 1995.

\bibitem{Sh80}
N.~Shibi.
\newblock Algorithme de recherche d'un stable de cardinalit\'e maximum dans un
  graphe sans \'etoile.
\newblock {\em Discrete Mathematics}, 29:53--76, 1980.

\end{thebibliography}

\end{document}